\numberwithin{equation}{section}
\newtheorem{theorem}{Theorem}[section]
\newtheorem{corollary}[theorem]{Corollary}
\newtheorem{proposition}[theorem]{Proposition}
\newtheorem{lemma}[theorem]{Lemma}
\newtheorem{definition}[theorem]{Definition}
\newtheorem{remark}[theorem]{Remark}
\newtheorem{example}{Example}
\begin{document}
\title[Tropical variants of some complex analysis results]{Tropical variants of some complex analysis results}

\author[K. Liu]{Kai Liu}

\address{Kai Liu \newline Department of Mathematics\\ Nanchang University\\ Nanchang, Jiangxi, 330031, P. R.
China} \email{liukai@ncu.edu.cn,}

\author[I. Laine]{Ilpo Laine}

\address{Ilpo Laine \newline Department of Physics and Mathematics\\ University of Eastern Finland\\ Joensuu, FI-80101, Finland} \email{ilpo.laine@uef.fi}

\author[K. Tohge]{Kazuya Tohge}
\address{Kazuya Tohge \newline Kanazawa University, College of Science and Engineering\\ Kanazawa 920-1192, Japan} \email{tohge@se.kanazawa-u.ac.jp}

\keywords{tropical meromorphic functions; tropical Nevanlinna theory; Fermat type equations; Hayman conjecture; Br\"{u}ck conjecture.}

\thanks{Financially supported by the NSFC No. 11301260, the NSF of Jiangxi No. 20132BAB211003, the YFED of Jiangxi of China No. GJJ13078, the China Scholarship Council No. 201406825034, the Academy of Finland No. 268009 and the JSPS Grant-in-Aid for Scientific Research (C) No. 25400131.}

\begin{abstract}
Tropical Nevanlinna theory studies value distribution of continuous piecewise linear functions of a real variable.  In this paper, we use the reasoning from tropical Nevanlinna theory to present tropical counterparts of some classical complex results related to Fermat type equations, Hayman conjecture and Br\"{u}ck conjecture.
\end{abstract}

\maketitle

\section{Introduction}

Tropical Nevanlinna theory may be understood as a cross-road between tropical mathematics and the classical Nevanlinna theory, see \cite{DB} for a general background of tropical mathematics and \cite{halburd}, \cite{KLT} and \cite{Laine1} for the tropical setting of the Nevanlinna theory. Indeed, tropical Nevanlinna theory provides the flexibility of applying complex analysis methods to considering real functions. Recall that Halburd and Southall \cite{halburd} described continuous piecewise linear functions on $\mathbf{R}$ with one-side integer derivatives as tropical meromorphic functions, and established tropical versions of the first main theorem and the lemma on the logarithmic derivative. Laine and Tohge \cite{Laine1} then showed that tropical Nevanlinna theory also holds to piecewise linear functions with arbitrary real slopes, and obtained a tropical version of the second main theorem.

Tropical Nevanlinna theory actually opens up possibilities for further investigations on value distribution and uniqueness theory of tropical meromorphic functions. In this paper, we shall consider tropical meromorphic solutions $y(x)$ to discrete equations of type
$$\sum_{j=0}^{q}n_{j}y(x+j)=1,$$
where the coefficients $n_{j}$ are integers and $q=1,2,3$. These considerations present, in some sense, tropical variants of some classical complex analysis topics such as Fermat type equations, a conjecture proposed to Hayman, and Br\"{u}ck conjecture type results. For the convenience of the reader, we shortly describe the respective complex analysis background before proceeding to the corresponding tropical reasoning.

Recall that we are considering, throughout, a max-plus semi-ring endowing $\mathbf{R}\cup\{-\infty\}$ with tropical addition $$x\oplus y:=\max(x,y)$$ and tropical multiplication $$x\otimes y:=x+y.$$ We also use $x\oslash y:=x-y$ and $x^{\otimes \alpha}=\alpha x$, for $\alpha\in\mathbf{R}$. The identity element $0_{\circ}$ for tropical addition is $0_{\circ}=-\infty$ and the identity element $1_{\circ}$ for multiplication is $1_{\circ}=0$. Such a structure is not a ring, for not all elements have tropical additive inverses.

Assume that the reader is familiar with the basic notations and results of the tropical Nevanlinna theory, see e.g. \cite{KLT} and \cite{Laine1}. However, for the convenience of the reader, we recall here the following basic notations.

\begin{definition}\label{zeropole}\cite{halburd}
Let $f(x)$ be a tropical meromorphic function and
\begin{equation}
\omega_{f}(x_{0})=\lim_{\varepsilon\rightarrow0^{+}}[f'(x_{0}+\varepsilon)-f'(x_{0}-\varepsilon)].
\end{equation}
If $\omega_{f}(x_{0})>0$, then $x_{0}$ is called the root $($zero$)$ of $f(x)$ with multiplicity $\omega_{f}(x_{0})$. If $\omega_{f}(x_{0})<0$, then $x_{0}$ is called the pole of $f(x)$ with multiplicity $-\omega_{f}(x_{0})$.
\end{definition}

The tropical proximity function for tropical meromorphic functions is defined as
\begin{equation}\label{p}
m(r,f):=\frac{1}{2}\left(f^{+}(r)+f^{+}(-r)\right),
\end{equation}
where $f^{+}(r):=\max\{f(x),0\}$ for $x\in \mathbf{R}$. The tropical counting function for poles in $(-r,r)$ is defined as
\begin{equation}\label{N}
N(r,f):=\frac{1}{2}\int_{0}^{r}n(t,f)dt=\frac{1}{2}\sum_{|b_{\nu}|<r}\tau_{f}(b_{\nu})(r-|b_{\nu}|),
\end{equation}
where $n(t,f)$ is the number of distinct poles of $f$ in the interval $(-r,r)$, each pole multiplied by its multiplicity $\tau_{f}$.
The tropical characteristic function $T(r,f):=m(r,f)+N(r,f)$. Following the usual classical notation, define the order of $f(x)$ as $$\rho(f):=\limsup_{r\rightarrow\infty}\frac{\log T(r,f)}{\log r},$$ and the hyper-order of $f(x)$ by $$\rho_{2}(f):=\limsup_{r\rightarrow\infty}\frac{\log\log T(r,f)}{\log r}.$$

In this paper, we typically need to work with tropical $1$-periodic meromorphic functions $\Pi (x)$ and with tropical exponential functions. Concerning $1$-periodic functions, recall their representation in terms of the sawtooth functions
$$\pi^{(a,b)}(x):=\frac{1}{a+b}\min\{ a(x-[x]),-b((x-[x])-1)\}, \quad a,b>0,$$
see \cite{KLT}, Theorem 6.7.

In what follows, we frequently shorten the notations by using the notations $\Pi (x), \Pi_{j}(x), \widetilde{\Pi}(x), \widehat{\Pi}(x)$  etc. for tropical meromorphic functions of period $1$, possibly meaning different functions at different occasions. In particular, such notations may be used to point out several independent $1$-periodic functions are to be considered simultaneously. We can also write $\Pi (x)+\Pi (x)=\Pi (x)$, $c\Pi (x)=\Pi (x)$, etc. We also use the notation $\Xi (x)$ to denote tropical meromorphic functions that are $2$-periodic and anti-$1$-periodic, again possibly meaning different functions at different occasions.

Concerning tropical exponential functions, recall first their definitions, see \cite{KLT}, Section 1.2.4:
$$e_{\alpha}(x):=\alpha^{[x]}(x-[x])+\sum_{j=-\infty}^{[x]-1}\alpha^{j}=\alpha^{[x]}\left(x-[x]+\frac{1}{\alpha-1}\right),$$
where $|\alpha|>1$ is a real number. In this case, $e_{\alpha}(x)$ is strictly increasing, and $e_{\alpha}(x)$ is a tropical entire function, since it has no poles. If then $|\beta|<1$, $\beta\neq 0$, we define
$$e_{\beta}(x):=\sum_{j=[x]}^{\infty}\beta^{j}-\beta^{[x]}(x-[x])=\beta^{[x]}\left(\frac{1}{1-\beta}-x+[x]\right).$$
If $0<\beta<1$, $e_{\beta}(x)$  is a tropical entire function as well. However, if $\beta <-1$, then $e_{\beta}(x)$ is tropical meromorphic, but not tropical entire. For more details concerning tropical exponential functions, see \cite{KLT}, Section 1.2.4. In  particular, note that tropical exponentials $y(x):=e_{\alpha}(x)$ satisfy the equation $y(x+1)=y(x)^{\otimes\alpha}(=\alpha y(x))$, for all $\alpha\neq 0,1$.

\section{Fermat type equations in the tropical setting}
The classical {\noindent\bf Fermat last theorem} that equation $x^n+y^n=1$ has no non-trivial rational solutions, when $n\geq3$, had been proved, after three centuries, by Wiles in \cite{wiles1}, see also \cite{wiles2}.

Considering $x,y$  in $x^{n}+y^{n}=1$ as elements in function fields, we land at looking equations that may be called as Fermat type functional equations. As to meromorphic solutions to the most simple case
\begin{equation}\label{n2op}
f(z)^{n}+g(z)^{n}=1,
\end{equation}
it is known that $(\ref{n2op})$ has no transcendental meromorphic solutions when $n\geq4$, while for $n=2,3$ such meromorphic solutions are easy to find.

As to meromorphic solutions of the more general case
\begin{equation}\label{n3op}
f(z)^{n}+g(z)^{n}+h(z)^{n}=1,
\end{equation}
it is known that meromorphic solutions $f,g,h$ may be found for $n\leq 6$, see \cite{ggg1}, \cite{ggg2}, and the references therein, while for $n\geq 9$ no such meromorphic solutions exist. This non-existence result has been proved by Hayman in \cite{hayman2}, where a detailed presentation for the more general situation
\begin{equation}\label{nop}
\sum_{j=1}^{n}f_{j}(z)^{k}=1
\end{equation}
is to be found. Observe that the cases $n=7,8$ remain open for (\ref{n3op}).

We now proceed to considering certain Fermat type functional equations in the tropical setting. Concerning the most simple case (\ref{n2op}), one clearly gets two corresponding tropical equations, namely
\begin{equation}\label{k2trsum}
f(x)^{\otimes k}\oplus g(x)^{\otimes k}=1
\end{equation}
and
\begin{equation}\label{k2trpr}
f(x)^{\otimes k}\otimes g(x)^{\otimes k}=1,
\end{equation}
asking in both case possible tropical meromorphic solutions $f$ and $g$, provided that $k$ is a natural number. However, as we are treating tropical meromorphic functions with real slopes, it would be natural to consider
\begin{equation}\label{r2trsum}
f(x)^{\otimes \alpha}\oplus g(x)^{\otimes \beta}=1
\end{equation}
and
\begin{equation}\label{r2trpr}
f(x)^{\otimes \alpha}\otimes g(x)^{\otimes \beta}=1,
\end{equation}
where $\alpha ,\beta$ are real numbers.

Before proceeding, recall that tropical polynomials are tropical meromorphic functions that admit finitely many roots and no poles. Equivalently, tropical polynomials may be represented in the form
\begin{equation}\label{eq3}
f(x)=\bigoplus_{i=0}^{n}(a_{i}\otimes x^{\otimes s_{i}})=a_{n}\otimes x^{\otimes s_{n}}\oplus a_{n-1}\otimes x^{\otimes s_{n-1}}\oplus\cdots\oplus a_{1}\otimes x^{\otimes s_{1}}\oplus a_{0}\otimes x^{\otimes s_{0}},
\end{equation}
where the coefficients $a_{i}$ are constants and $s_{i}$ are real numbers, $i=0,1,2,\cdots,n$ and $s_{0}<s_{1}<\cdots<s_{n}$. Evaluating tropical polynomials in the classical notation results in
\begin{equation}\label{eq4}
f(x)=\max\{a_{n}+s_{n}x,a_{n-1}+s_{n-1}x,\cdots,a_{1}+s_{1}x,a_{0}+s_{0}x\}.
\end{equation}

More generally, tropical meromorphic functions that admit no poles, may be called as tropical entire functions, and they have a series expansion
\begin{equation}\label{trent}
f(x)=\bigoplus_{n=0}^{\infty}(a_{n}\otimes x^{\otimes s_{n}})=a_{0}\otimes x^{\otimes s_{0}}\oplus a_{1}\otimes x^{\otimes s_{1}}\oplus\cdots\oplus  a_{n-1}\otimes x^{\otimes s_{n-1}} \oplus a_{n}\otimes x^{\otimes s_{n}} \cdots ,
\end{equation}
that is,
\begin{eqnarray}\label{trentcl}
f(x)=\max_{n\in \mathbf{Z}_{+}\cup\{0\}}\{a_{n}+s_{n} x\},
\end{eqnarray}
where the exponents $s_{n}$ are real numbers and $s_{0}<s_{1}<\cdots<s_{n}<\cdots$, see \cite[Chapter 2]{KLT}.

\begin{remark} To start with, observe that equation
$$f(x)^{\otimes k}\oplus g(x)^{\otimes k}=1,$$
where $k$ is a natural number, admits no non-constant tropical entire solutions. Indeed, suppose that one of $f,g$, say $f$, is non-constant. If $f$ now has no slope discontinuities, then $kf(x)>1$ for some points $x$ with $|x|$ large enough, a contradiction. Let then $x_{0}$ be a slope discontinuity of $f$, and consider the slopes $s_{j_{0}-1}$ and $s_{j_{0}}$ on both sides of $x_{0}$. Since $f$ has no poles, we have $s_{j_{0}}-s_{j_{0}-1}>0$. If $s_{j_{0}}>0$, then  $kf(x)\geq kf(x_{0})+s_{j_{0}}(x-x_{0})$ for all $x>x_{0}$, hence $kf(x)>1$ for all $x$ large enough, a contradiction. If then $s_{j_{0}}\leq 0$, we have $s_{j_{0}-1}<0$, and we have $kf(x)\geq kf(x_{0})+s_{j_{0}-1}(x-x_{0})$ for all $x<x_{0}$, and so $kf(x)>1$ for all $x$ with $|x|$ large enough, again a contradiction. On the other hand, tropical meromorphic, non-entire solutions are immediate to find. As a trivial example, take $k=1$. Then $f(x):=\min (1,-x+2)$ and $g(x):=\min (1,x)$ are solutions to $f(x)\oplus g(x)=1$.
\end{remark}

The reasoning used in the remark above applies to prove the following more general case.
\begin{theorem}\label{th3000}
There are no non-constant tropical entire functions $f_{1},\ldots ,f_{n}$ that satisfy
\begin{equation}\label{ntrop}
\bigoplus_{j=1}^{n}f_{j}(z)^{\otimes\alpha_{j}}=1,
\end{equation}
where the exponents $\alpha_{1},\ldots ,\alpha_{n}$ all are positive.
\end{theorem}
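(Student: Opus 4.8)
The plan is to mimic the argument of the preceding Remark, now applied simultaneously to the finitely many tropical entire functions $f_1,\dots,f_n$. Suppose, for contradiction, that not all of $f_1,\dots,f_n$ are constant, and let $J\subseteq\{1,\dots,n\}$ be the nonempty set of indices for which $f_j$ is non-constant. The governing identity $\max_{1\le j\le n}\alpha_j f_j(x)=1$ forces $\alpha_j f_j(x)\le 1$ for every $j$ and every $x\in\mathbf{R}$; equivalently, since each $\alpha_j>0$, every $f_j$ is bounded above by $1/\alpha_j$ on all of $\mathbf{R}$. The goal is to show that a non-constant tropical entire function cannot be bounded above on $\mathbf{R}$, which will contradict $j\in J$ being nonempty and finish the proof.

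So fix $j\in J$ and write $f:=f_j$. First I would dispose of the case where $f$ has no slope discontinuities: then $f(x)=a+sx$ with $s\neq 0$, which is unbounded above as $x\to+\infty$ (if $s>0$) or as $x\to-\infty$ (if $s<0$), a contradiction. Otherwise, pick a slope discontinuity $x_0$ of $f$ and let $s_-=f'(x_0-0)$ and $s_+=f'(x_0+0)$ be the one-sided slopes; since $f$ is tropical entire it has no poles, so $\omega_f(x_0)=s_+-s_->0$, i.e.\ $s_+>s_-$. By convexity of $f$ at a root, $f(x)\ge f(x_0)+s_+(x-x_0)$ for all $x\ge x_0$ and $f(x)\ge f(x_0)+s_-(x-x_0)$ for all $x\le x_0$. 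If $s_+>0$, the first inequality makes $f$ unbounded above as $x\to+\infty$; if $s_+\le 0$, then $s_-<s_+\le 0$, so $s_-<0$ and the second inequality makes $f$ unbounded above as $x\to-\infty$. In either case $f$ is unbounded above, contradicting $f\le 1/\alpha_j$.

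The only slightly delicate point, and the one I would take care to justify, is the convexity-type estimate $f(x)\ge f(x_0)+s_\pm(x-x_0)$ used above: one must know that the one-sided derivative of a tropical entire function is non-decreasing, which follows because all its slope discontinuities have $\omega_f>0$ (no poles are allowed in a tropical entire function) so $f'$ only ever jumps up. Granting this monotonicity of $f'$, the function lies above each of its one-sided supporting lines, which is exactly what the estimate asserts. No genuine obstacle remains; the argument is a routine extension of the Remark from two functions to $n$, the positivity of all the $\alpha_j$ being what guarantees that each $f_j$ inherits a two-sided bound from the single equation $\bigoplus_{j=1}^n f_j(x)^{\otimes\alpha_j}=1$.
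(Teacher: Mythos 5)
Your proposal is correct and follows essentially the same route as the paper: the paper's proof of this theorem simply invokes the slope argument of the preceding Remark, namely that the tropical sum forces each $\alpha_j f_j\le 1$ while a non-constant tropical entire function (having only upward slope jumps, hence convex) must be unbounded above. Your write-up merely makes explicit the convexity/supporting-line step that the paper leaves implicit, so there is nothing to correct.
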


\begin{proof} Applying the reasoning in used in the preceding remark, it is immediate to see that a contradiction readily follows whenever one of the functions $f_{1},\ldots ,f_{n}$  is tropical entire and non-constant.
\end{proof}

\begin{remark} Of course, the same result follows whenever the exponents $\alpha_{1},\ldots ,\alpha_{n}$ all are negative. However, if there are different signs among the exponents $\alpha_{1},\ldots ,\alpha_{n}$, the claim obviously fails. Take, e.g., $f(x)=\frac{1}{\alpha}$ and $g(x)=\frac{1}{\beta}\oplus(x\otimes\frac{1}{\beta})$ with $\alpha>0, \beta<0$. Then we have $f(x)^{\otimes \alpha}\oplus g(x)^{\otimes \beta}=1$. Observe that $-g(x)$ is not a tropical entire function.
\end{remark}

\begin{remark} Again, it is immediate to observe that equation (\ref{ntrop}) always admits non-trivial tropical meromorphic solutions. Suppose again, for simplicity, that the exponents $\alpha_{1},\ldots ,\alpha_{n}$ all are positive. Taking now $f_{1}:=\frac{1}{\alpha_{1}}\min (1,-x+2)$, $f_{2}:=\frac{1}{\alpha_{2}}\min (1,x)$ and $\max (f_{3},\ldots ,f_{n})\leq \min (f_{1},f_{2})$, a solution $f_{1},\ldots ,f_{n}$ is at hand.
\end{remark}

We next proceed to considering tropical meromorphic solutions to equation
\begin{equation}\label{prtrop}
f(x)^{\otimes \alpha}\otimes g(x)^{\otimes \beta}=1
\end{equation}
with real exponents $\alpha ,\beta$. It is trivial to find $f$, $g$ satisfying $(\ref{prtrop})$ for any $\alpha,\beta$. Indeed, we may take, for example,  $f(x)=x\otimes\frac{1}{\alpha}$ and $g(x)=x^{\otimes-\frac{\alpha}{\beta}}$. However, finding expressions for general solutions seems to be more complicated, and we are restricting ourselves to considering equations that may be called tropical difference Fermat type functional equations. Such an equation corresponding to (\ref{prtrop}) is
\begin{equation}\label{kpp}
f(x)^{\otimes \alpha}\otimes f(x+1)^{\otimes \beta}=1,
\end{equation}
where $\alpha,\beta$ are real numbers, hence
\begin{equation}\label{kppcl}
\alpha f(x)+\beta f(x+1)=1.
\end{equation}

The key parts of this paper are then treating similar equations
\begin{equation}\label{kppclgen}
\sum_{j=0}^{s}n_{j}f(x+j)=1
\end{equation}
with $s=2$ and $s=3$.

The remaining of the paper is now being organized as follows. In Section \ref{lemmas} we collect a number of propositions that are either needed in the subsequent considerations, or might appear useful in future considerations. In Section \ref{F2}, equation (\ref{kppclgen}), i.e. the case $s=1$, is shortly treated, while the next Section \ref{F3} is devoted to considering tropical difference Fermat type equations (\ref{kppclgen}) with $s=2$. Section \ref{F4} is then treating, partially, the case $s=3$. The last two sections are describing tropical counterparts to the Hayman conjecture from complex analysis (Section \ref{Hconj}) and the Br\"{u}ck conjecture (Section \ref{Bruck}).

\section{Preliminary propositions}\label{lemmas}

We start this section by recalling the following theorem, see \cite{KLT}, Theorem 7.3 and Theorem 7.4 (and making use of the identity $e_{c}(x+1)=ce_{c}(x)$). Observe that this version is formulated for our subsequent needs.

\begin{theorem}\label{ThmA} The equation
\begin{equation}\label{eq1}
y(x+1)=y(x)^{\otimes c}
\end{equation}
with $c\in\mathbf{R}\backslash\{0\}$ admits non-constant tropical meromorphic solutions on $\mathbf{R}$ of hyper-order $\rho_{2}(f)<1$ if and only if $c=\pm1$. Moreover, if $f(x)$ is a non-constant tropical meromorphic solution to $(\ref{eq1})$, then the following representations follow:

(1) If $c=1$, then $f(x)$ is 1-periodic. Hence,
$$f(x)=\Pi (x).$$

(2) If $c=-1$, then $f(x)$ is 2-periodic, anti-1-periodic. Hence
$$f(x)=\Xi (x).$$

(3) If $c\neq\pm1$, then all solutions of $(\ref{eq1})$ are finite linear combinations of tropical exponentials of type $f(x)=e_{c}(x-b)$, where $b\in [0,1)$.
\end{theorem}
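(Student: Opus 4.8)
The plan is to split into the cases $c=1$, $c=-1$ and $c\neq\pm1$, and in each case to (i) check that the functions listed in (1)--(3) are genuine solutions, (ii) show they exhaust all solutions, and (iii) determine the hyper-order. I would dispose of the two trivial cases first. For $c=1$ the equation reads literally $f(x+1)=f(x)$, so the solutions are exactly the $1$-periodic tropical meromorphic functions $\Pi(x)$, among which the sawtooth functions $\pi^{(a,b)}$ are non-constant; such a function is bounded and has boundedly many poles per unit interval, so $f^{+}$ is bounded and $n(t,f)=O(1)$, whence $T(r,f)=O(r)$ and $\rho_{2}(f)=0<1$. For $c=-1$, iterating once gives $f(x+2)=-f(x+1)=f(x)$, so $f$ is $2$-periodic and anti-$1$-periodic, i.e. $f=\Xi(x)$; conversely every such $\Xi$ solves the equation, non-constant examples exist, and boundedness again yields $\rho_{2}(f)=0<1$.

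For $c\neq\pm1$, the ``if'' part of (3) is immediate from the identity $e_{c}(x+1)=c\,e_{c}(x)$ recalled above: replacing $x$ by $x-b$ gives $e_{c}(x+1-b)=c\,e_{c}(x-b)$, so $y(x)=e_{c}(x-b)$ satisfies $y(x+1)=y(x)^{\otimes c}$ for every $b\in[0,1)$, and (suitably interpreted) finite tropical-linear combinations of such functions are again solutions. The content left to prove is thus that these exhaust the solution set and that no non-constant solution has hyper-order $<1$.

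Next I would settle the hyper-order dichotomy. Iterating the equation gives $f(x+n)=c^{n}f(x)$ for all $n\in\mathbf{Z}$, so on $[n,n+1]$ the function $f$ is $c^{n}$ times its profile on $[0,1]$. Let $|c|>1$ and let $f$ be a non-constant solution. If $f$ takes a positive value at some $x_{0}$, then $f(x_{0}+n)=c^{n}f(x_{0})\to+\infty$, forcing $m(r,f)\geq c_{1}|c|^{r}$ for large $r$; if instead $f\leq0$ everywhere, then $f$ cannot be tropical entire (a non-constant tropical entire function, being a max of affine functions, is unbounded above), hence $f$ has a pole, and the pole multiplicities on $[n,n+1]$ scale like $c^{n}$, forcing $N(r,f)\geq c_{2}|c|^{r}$. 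Either way $T(r,f)\geq c_{3}|c|^{r}$, so $\log\log T(r,f)\geq\log r+O(1)$ and $\rho_{2}(f)\geq1$ (in fact $=1$, using standard upper estimates). For $0<|c|<1$ the same argument run as $x\to-\infty$ via $f(x-n)=c^{-n}f(x)$ gives the same conclusion, and for $c<-1$ one applies it to $f(x+2)=c^{2}f(x)$. Together with the $c=\pm1$ cases this establishes the ``if and only if''.

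Finally, for the structural claim in case $c\neq\pm1$ I would restrict a solution $f$ to the fundamental domain $[0,1]$, where it is piecewise linear with finitely many break points $b_{1},\dots,b_{k}\in(0,1)$ of real multiplicities $\omega_{1},\dots,\omega_{k}$ subject to $f(1)=c\,f(0)$, the global solution being recovered from $f(x)=c^{[x]}f(x-[x])$. One then matches this data break point by break point with an explicit finite combination of the elementary solutions $e_{c}(x-b_{j})$, carrying the weights $\omega_{j}$ and, at break points that are local maxima of the slope, using tropical differences, the residual affine part being absorbed into the $b=0$ exponential; uniqueness follows by comparing slopes on successive intervals. I expect \emph{this last step} to be the main obstacle: it requires careful bookkeeping of slopes and multiplicities and a precise convention for what a ``finite tropical-linear combination'' is allowed to be, in particular handling $c<0$, where multiplication by $c$ turns $\max$ into $\min$. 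The remaining parts are comparatively routine. Alternatively, one may simply quote \cite[Theorems 7.3 and 7.4]{KLT}, of which the stated theorem is a reformulation obtained by using $e_{c}(x+1)=c\,e_{c}(x)$.
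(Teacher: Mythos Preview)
The paper does not supply its own proof of this theorem: it is stated as a recalled result, with the one-line justification ``see \cite{KLT}, Theorem 7.3 and Theorem 7.4 (and making use of the identity $e_{c}(x+1)=ce_{c}(x)$)''. Your closing ``alternatively'' is therefore exactly the paper's approach, and everything else in your proposal is strictly more than the paper offers.

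Your detailed sketch is sound in outline; the treatment of $c=\pm1$ and the hyper-order dichotomy are correct. The one point that needs correcting is your reading of ``finite linear combinations'' in part (3) as \emph{tropical} linear combinations. They are ordinary real linear combinations $\sum_{j=1}^{q}\beta_{j}e_{c}(x-b_{j})$ (cf.\ the paper's Remark immediately following the theorem). Since the underlying equation $y(x+1)=cy(x)$ is linear in the classical sense, classical linear combinations of solutions are always solutions, for every sign of $c$; this dissolves the obstacle you flag about $c<0$ turning $\max$ into $\min$, and also removes the need for ``tropical differences'' at slope-maxima in the matching step. With that correction, your plan for (3)---reconstructing $f$ from its break data on $[0,1)$ as a classical linear combination of the $e_{c}(x-b_{j})$---is the standard argument and goes through without difficulty.
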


\begin{remark}\label{rem305} In what follows, we use the notation $L_{b}(e_{c}(x-b))$ for finite linear combinations $\sum_{j=1}^{q}\beta_{j}e_{c}(x-b_{j})$, where $c$ is fixed and $b=\{ b_{1},\ldots ,b_{q}\}\subset [0,1)$. In particular, if $A$ is a constant, then $L_{b}(Ae_{c}(x-b))=L_{b}(e_{c}(x-b))$, since, $e_{c}(x+1-b)=ce_{c}(x-b)$, $L_{b}(e_{c}(x+1-b))=L_{b}(e_{c}(x-b))$.
\end{remark}

\begin{proposition}\label{P1}
All tropical meromorphic solutions to
$$f(x+1)-f(x)=c, \quad c\in\mathbf{R},$$
are of the form $f(x)=\Pi (x)+cx.$
\end{proposition}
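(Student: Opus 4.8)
The plan is to reduce the equation to the purely periodic situation by subtracting off the obvious particular solution $cx$. First I would set
$$g(x):=f(x)-cx\ =\ f(x)\oslash x^{\otimes c},$$
and check that $g$ is again a tropical meromorphic function. This is where the only care is needed: subtracting the linear term $cx$ (a tropical polynomial) from the continuous piecewise linear function $f$ with real slopes again produces a continuous piecewise linear function with real slopes, and since the correction term $cx$ contributes no jump to the one-sided derivatives, $\omega_{g}(x_{0})=\omega_{f}(x_{0})$ for every $x_{0}$ in the sense of Definition \ref{zeropole}; in particular $g$ has exactly the same poles, with the same multiplicities, as $f$. A direct computation then gives
$$g(x+1)-g(x)=\bigl(f(x+1)-f(x)\bigr)-c=c-c=0,$$
so that $g$ is a $1$-periodic tropical meromorphic function. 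Hence $g(x)=\Pi (x)$ in the notation of the paper (equivalently, this is case (1) of Theorem \ref{ThmA}), and therefore $f(x)=\Pi (x)+cx$, which is the asserted form.

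For the converse inclusion I would simply note that if $f(x)=\Pi (x)+cx$ for some $1$-periodic tropical meromorphic $\Pi$, then $f$ is tropical meromorphic (a sum of tropical meromorphic functions), and
$$f(x+1)-f(x)=\Pi (x+1)+c(x+1)-\Pi (x)-cx=c,$$
using $\Pi (x+1)=\Pi (x)$. Thus every such $f$ solves the equation, and the characterization is complete.

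There is essentially no serious obstacle here; the substitution $g=f-cx$ does all the work, and the only thing that deserves an explicit word is the verification that $g$ really is tropical meromorphic and inherits the pole structure of $f$ — which is immediate precisely because the correction term is linear and hence introduces neither poles nor slope discontinuities. Everything else is bookkeeping with the definitions of $\oslash$, $x^{\otimes c}$, and the period-$1$ shift.
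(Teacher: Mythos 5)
Your proof is correct and is exactly the standard ``trivial'' argument the paper has in mind: it gives no proof of its own but refers to \cite{KLT}, p.~157, where the same substitution $g(x)=f(x)-cx$ (which is tropical meromorphic since the linear correction creates no slope discontinuities) reduces the equation to $1$-periodicity of $g$. Your explicit check of both directions and of the pole structure is careful bookkeeping, nothing more is needed.
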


\begin{remark}\label{rem31}
As for the trivial proof of this proposition, see \cite{KLT}, p. 157.
\end{remark}

In what follows, we use the notation $\Pi_{0}$ for $1$-periodic tropical meromorphic functions that satisfy $\Pi_{0}(0)=0$.

\begin{proposition}\label{P11}
All tropical meromorphic solutions to
\begin{equation}\label{E11}
f(x+1)-f(x)=\Pi_{0}(x),
\end{equation}
take the form
\begin{equation}\label{S11}
f(x)=\Pi (x)+\Phi (x,\Pi_{0})
\end{equation}
where
\begin{equation}\label{F11}
\Phi (x,\Pi_{0}):=[x]\Pi_{0}(x).
\end{equation}
\end{proposition}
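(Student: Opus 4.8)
The plan is to solve the difference equation~(\ref{E11}) directly by telescoping, and then to identify the homogeneous part using Proposition~\ref{P1}. First I would verify that the proposed function $f(x) = \Pi(x) + [x]\Pi_0(x)$ is in fact a solution: since $\Pi_0$ is $1$-periodic one has $\Pi_0(x+1) = \Pi_0(x)$, and $[x+1] = [x]+1$, so
\[
f(x+1) - f(x) = \Pi(x+1) - \Pi(x) + \bigl([x+1] - [x]\bigr)\Pi_0(x+1) = 0 + \Pi_0(x),
\]
using that $\Pi(x+1) = \Pi(x)$. One should also check that $\Phi(x,\Pi_0) = [x]\Pi_0(x)$ is genuinely tropical meromorphic, i.e. continuous piecewise linear with the appropriate one-sided slopes: continuity at each integer $n$ holds precisely because $\Pi_0(n) = \Pi_0(0) = 0$, which is exactly why the normalization $\Pi_0(0)=0$ was imposed; on each interval $(n,n+1)$ the function is $n\,\Pi_0(x)$, a constant multiple of a tropical meromorphic function, hence tropical meromorphic there.

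Next I would show that \emph{every} solution has the stated form. Suppose $f$ and $\tilde f$ are two tropical meromorphic solutions of~(\ref{E11}); then $g := f - \tilde f$ satisfies $g(x+1) - g(x) = 0$, so $g$ is $1$-periodic, i.e. $g = \Pi(x)$ for some $1$-periodic tropical meromorphic function (this is the $c=0$ instance of Proposition~\ref{P1}, or simply Theorem~\ref{ThmA}(1)). Since $\Phi(x,\Pi_0) = [x]\Pi_0(x)$ is one particular solution, any solution $f$ differs from it by a $1$-periodic function, giving $f(x) = \Pi(x) + [x]\Pi_0(x)$, which is~(\ref{S11}). Here the symbol $\Pi(x)$ on the right of~(\ref{S11}) absorbs the arbitrary periodic part, consistent with the paper's convention that $\Pi(x)$ denotes an unspecified $1$-periodic function.

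The only genuinely delicate point is the regularity check for $\Phi(x,\Pi_0)$ at the integers: one must confirm not merely continuity but that the jump in the derivative $\omega_{\Phi}(n)$ remains integer-compatible in the sense required for a tropical meromorphic function (in the Halburd--Southall setting) or, in the Laine--Tohge real-slope setting, simply that no spurious discontinuity of value is introduced. The slope of $\Phi$ just left of $n$ is $(n-1)\Pi_0'(n^-)$ and just right of $n$ is $n\,\Pi_0'(n^+)$, so $\omega_\Phi(n) = n\,\Pi_0'(n^+) - (n-1)\Pi_0'(n^-)$, a finite real number, together with the contribution of $\Pi_0$'s own slope discontinuities inside each $(n,n+1)$; all of these are admissible, so $\Phi(\,\cdot\,,\Pi_0)$ is tropical meromorphic as claimed. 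With that in hand the proof is complete.
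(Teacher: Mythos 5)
Your proposal is correct and follows essentially the same route as the paper: verify that $\Phi(x,\Pi_0)=[x]\Pi_0(x)$ is a particular tropical meromorphic solution (with the continuity at integers coming from $\Pi_0(0)=0$), then note that any solution differs from it by a $1$-periodic tropical meromorphic function. The paper's proof is just a terser version of the same argument, leaving the continuity and slope checks as "immediate", which you spell out explicitly.
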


\begin{proof} It is straightforward to see that $\Phi (x+1,\Pi_{0})-\Phi (x,\Pi_{0})=\Pi_{0}(x)$. Therefore, $\Phi (x,\Pi_{0})$ is a special solution to (\ref{E11}). It remains to verify that $\Phi (x,\Pi_{0})$ is continuous and piecewise linear; this is immediate. On the other hand, if $F(x)$ is an arbitrary solution to (\ref{E11}), then it is trivial to see that $F(x)-\Phi (x,\Pi_{0})$ is $1$-periodic.
\end{proof}

\begin{corollary}\label{P11cor}
All tropical meromorphic solutions to
\begin{equation}\label{E11cor}
f(x+1)-f(x)=\Pi (x),
\end{equation}
where $\Pi$ is $1$-periodic tropical meromorphic such that $\Pi (0)=d$, take the form
\begin{equation}\label{S11cor}
f(x)=\widetilde{\Pi}(x)+\Phi (x,\Pi)+dx
\end{equation}
where
\begin{equation}\label{F11cor}
\Phi (x,\Pi):=[x](\Pi (x) -d).
\end{equation}
\end{corollary}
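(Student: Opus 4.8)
The plan is to reduce the statement to Propositions \ref{P1} and \ref{P11} by peeling off the value $d=\Pi(0)$. First I would set $\Pi_{0}(x):=\Pi(x)-d$; then $\Pi_{0}$ is a $1$-periodic tropical meromorphic function with $\Pi_{0}(0)=0$, so it is of the type denoted $\Pi_{0}$ in the excerpt. Rewriting the equation $f(x+1)-f(x)=\Pi(x)$ as
$$f(x+1)-f(x)=\Pi_{0}(x)+d,$$
I would exploit the linearity (superposition principle) of the difference operator $f\mapsto f(x+1)-f(x)$: it suffices to produce one particular solution, since the difference of any two solutions solves the homogeneous equation $F(x+1)-F(x)=0$ and hence is a $1$-periodic tropical meromorphic function, i.e. of the form $\widetilde{\Pi}(x)$.

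For the particular solution, Proposition \ref{P1} (with the periodic term equal to $1_{\circ}$, i.e. $0$) gives that $x\mapsto dx$ solves $g(x+1)-g(x)=d$, while Proposition \ref{P11} applied to $\Pi_{0}$ gives that $\Phi(x,\Pi_{0})=[x]\Pi_{0}(x)=[x](\Pi(x)-d)$ solves $h(x+1)-h(x)=\Pi_{0}(x)$. Adding these, $f_{0}(x):=dx+[x](\Pi(x)-d)=dx+\Phi(x,\Pi)$ is a particular solution of the original equation; one checks directly, using $\Pi(x+1)=\Pi(x)$ and $[x+1]=[x]+1$, that $\Phi(x+1,\Pi)-\Phi(x,\Pi)=\Pi(x)-d$, whence $f_{0}(x+1)-f_{0}(x)=(\Pi(x)-d)+d=\Pi(x)$. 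Combining with the previous paragraph yields that every solution has the form $f(x)=\widetilde{\Pi}(x)+\Phi(x,\Pi)+dx$, as claimed.

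The only point requiring a little care --- and it is already handled in the proof of Proposition \ref{P11} --- is that $\Phi(x,\Pi)=[x](\Pi(x)-d)$ is genuinely a tropical meromorphic function, i.e. continuous and piecewise linear with admissible one-sided slopes. Continuity at each integer $n$ holds because $[x]$ has its jump there while the factor $\Pi(x)-d$ vanishes at $x=n$ (since $\Pi(n)=\Pi(0)=d$ by $1$-periodicity); on each interval $(n,n+1)$ the function is an integer multiple of the piecewise-linear function $\Pi$, hence piecewise linear, and adding the linear term $dx$ does not affect this. I do not expect any genuine obstacle beyond this bookkeeping, as the corollary is essentially Proposition \ref{P11} with the normalization $\Pi(0)=0$ removed.
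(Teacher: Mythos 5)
Your proposal is correct and follows the same route the paper intends for this corollary: peel off $d=\Pi(0)$ by writing $\Pi=\Pi_{0}+d$ with $\Pi_{0}(0)=0$, obtain the particular solution $dx+\Phi(x,\Pi_{0})=dx+\Phi(x,\Pi)$ from Propositions \ref{P1} and \ref{P11} via linearity, and observe that any two solutions differ by a $1$-periodic tropical meromorphic function. Your extra check that $\Phi(x,\Pi)$ is continuous and piecewise linear (using $\Pi(n)=d$ at integers) is exactly the verification already contained in the proof of Proposition \ref{P11}, so nothing further is needed.
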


\begin{remark}\label{P11correm} Observe that Remark 7.2 in \cite{KLT} fails by Proposition \ref{P11}. Note that $x\Pi_{0}(x)$ satisfies equation (\ref{E11}), and therefore, $g(x):=x\Pi_{0}(x)-(\Phi (x,\Pi_{0}))$ is $1$-periodic. However, $x\Pi_{0}(x)$ is not tropical meromorphic by its non-linearity, hence $g(x)$ is not tropical meromorphic as well. Moreover, we remark here that Theorem 7.7 in \cite{KLT} becomes incomplete, see Theorem \ref{ThmB}(1) below.
\end{remark}

As an illustration, see the graph of $\Phi (x,\Pi_{0})$ in Fig \ref{goodness}, when $\Pi_{0}(x)=\pi^{(1,1)}(x)$.

\begin{figure}[h]
\begin{minipage}[t]{1\linewidth}
\centering
\includegraphics[width=0.5\textwidth]{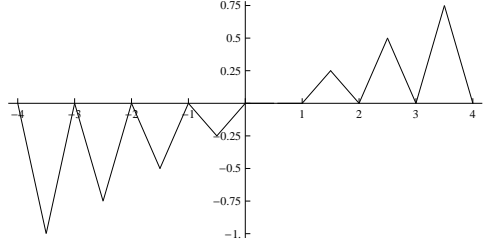}
\caption{$\Phi(x,\Pi_{0}),x\in[-4,4], \Pi_{0}(x)=\pi^{(1,1)}(x)$}\label{goodness}
\end{minipage}
\hspace{0.2cm}
\end{figure}

As for the subsequent reasoning, we need to proceed by proving

\begin{proposition}\label{P111}
All tropical meromorphic solutions to
\begin{equation}\label{E110}
f(x+1)-f(x)=\Phi (x,\Pi_{0}),
\end{equation}
with the $1$-periodic function $\Phi (x,\Pi_{0}):=[x]\Pi_{0}(x)$ defined as in Proposition \ref{P11}, take the form
\begin{equation}\label{S11}
f(x)=\Pi (x)+\Theta (x,\Pi_{0}),
\end{equation}
where
\begin{equation}\label{F11}
\Theta (x,\Pi_{0}):=(1+\frac{[x]([x]-1)}{2})\Pi_{0}(x).
\end{equation}
\end{proposition}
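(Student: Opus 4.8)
The plan is to follow the pattern of the proof of Proposition \ref{P11}: first exhibit the explicit function $\Theta(x,\Pi_{0})$ as one particular tropical meromorphic solution of (\ref{E110}), and then show that every solution differs from it by a $1$-periodic function. For the first step I would simply substitute $\Theta(x,\Pi_{0})=\bigl(1+\tfrac{[x]([x]-1)}{2}\bigr)\Pi_{0}(x)$ into the equation. Using $[x+1]=[x]+1$ together with the $1$-periodicity of $\Pi_{0}$ (so that $\Pi_{0}(x+1)=\Pi_{0}(x)$), the constant $1$ drops out and a one-line computation gives
$$\Theta(x+1,\Pi_{0})-\Theta(x,\Pi_{0})=\Bigl(\tfrac{([x]+1)[x]}{2}-\tfrac{[x]([x]-1)}{2}\Bigr)\Pi_{0}(x)=\tfrac{[x]}{2}\bigl(([x]+1)-([x]-1)\bigr)\Pi_{0}(x)=[x]\,\Pi_{0}(x),$$
which is exactly $\Phi(x,\Pi_{0})$, the right-hand side of (\ref{E110}).

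Next I would check that $\Theta(x,\Pi_{0})$ is genuinely a tropical meromorphic function, which is the point that requires a little care. On each interval $[n,n+1)$, $n\in\mathbf{Z}$, the coefficient $1+\tfrac{[x]([x]-1)}{2}$ equals the \emph{constant} integer $c_{n}:=1+\tfrac{n(n-1)}{2}$, so there $\Theta(x,\Pi_{0})=c_{n}\Pi_{0}(x)$, which is continuous and piecewise linear. The only issue is continuity across the integers, where $c_{n}$ jumps; but since $\Pi_{0}$ is $1$-periodic with $\Pi_{0}(0)=0$, we have $\Pi_{0}(n)=0$ for every integer $n$, and hence the limit of $c_{n-1}\Pi_{0}(x)$ as $x\to n^{-}$ and the value $c_{n}\Pi_{0}(n)$ at $x=n^{+}$ both vanish. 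Thus the pieces glue continuously, and the slope discontinuities of $\Theta(x,\Pi_{0})$ — those of $\Pi_{0}$, scaled on each unit interval, together with possible corners at the integers — form a locally finite set. Therefore $\Theta(x,\Pi_{0})$ is tropical meromorphic and is a special solution of (\ref{E110}). (This normalization $\Pi_{0}(0)=0$ is exactly what makes the argument go through, just as in Proposition \ref{P11}; it is the main, essentially the only, obstacle.)

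Finally, for an arbitrary tropical meromorphic solution $F(x)$ of (\ref{E110}), the difference $g(x):=F(x)-\Theta(x,\Pi_{0})$ is again tropical meromorphic and satisfies $g(x+1)-g(x)=\Phi(x,\Pi_{0})-\Phi(x,\Pi_{0})=0$, so $g$ is $1$-periodic; that is, $g(x)=\Pi(x)$ for some $1$-periodic tropical meromorphic function. Consequently $F(x)=\Pi(x)+\Theta(x,\Pi_{0})$, which is precisely the asserted form (\ref{S11}). Conversely, any function of that form solves (\ref{E110}) since $\Pi$ contributes $0$ to the difference and $\Theta(x,\Pi_{0})$ contributes $\Phi(x,\Pi_{0})$, so the description is complete.
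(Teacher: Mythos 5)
Your proposal is correct and is essentially the paper's argument: the paper dismisses this as "an elementary computation," and what you have written out is exactly that computation, following the same pattern as Proposition \ref{P11} (verify the special solution $\Theta(x,\Pi_{0})$, check continuity at the integers via $\Pi_{0}(0)=0$, and note that the difference of two solutions is $1$-periodic). Your explicit attention to the gluing at integer points is a welcome detail the paper leaves implicit.
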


\begin{proof} This is an elementary computation.
\end{proof}

To illustrate the situation, look at the graph of $\Theta (x,\Pi_{0})$ in Fig \ref{sdd}, with $\Pi_{0}(x)=\pi^{(1,1)}(x)$ again.

\begin{figure}[h]
\begin{minipage}[t]{1\linewidth}
\centering
\includegraphics[width=0.5\textwidth]{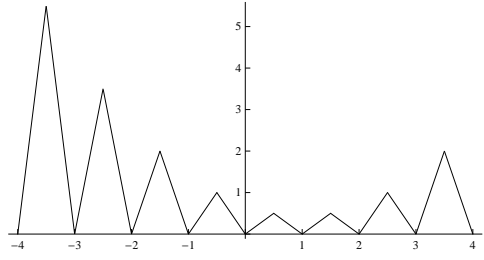}
\caption{$\Theta(x,\Pi_{0}),x\in[-4,4], \Pi_{0}(x)=\pi^{(1,1)}(x)$}\label{sdd}
\end{minipage}
\hspace{0.2cm}
\end{figure}

\begin{corollary}\label{P111cor}
All tropical meromorphic solutions to
\begin{equation}\label{E11cor}
f(x+1)-f(x)=\Phi (x,\Pi)
\end{equation}
with the $1$-periodic function $\Phi (x,\Pi):=[x](\Pi (x)-\Pi (0))$ defined as in Corollary \ref{P11cor}, take the form
\begin{equation}\label{S11cor}
f(x)=\Pi (x)+\Theta (x,\Pi),
\end{equation}
where
\begin{equation}\label{F11}
\Theta (x,\Pi):=(1+\frac{[x]([x]-1)}{2})(\Pi (x)-\Pi (0)).
\end{equation}
\end{corollary}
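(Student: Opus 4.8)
The plan is to reduce this corollary to Proposition \ref{P111} by the same device that turns Proposition \ref{P11} into Corollary \ref{P11cor}. I would set $\Pi_{0}(x):=\Pi(x)-\Pi(0)$. Since $\Pi$ is $1$-periodic tropical meromorphic, so is $\Pi_{0}$, and moreover $\Pi_{0}(0)=0$; thus $\Pi_{0}$ is an admissible function of the class for which Propositions \ref{P11} and \ref{P111} are stated. The first step is then the trivial observation that $\Phi(x,\Pi)=[x](\Pi(x)-\Pi(0))=[x]\Pi_{0}(x)=\Phi(x,\Pi_{0})$, so that the equation $f(x+1)-f(x)=\Phi(x,\Pi)$ is literally the equation treated in Proposition \ref{P111}.

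Next I would invoke Proposition \ref{P111} verbatim: every tropical meromorphic solution has the form $f(x)=\Pi(x)+\Theta(x,\Pi_{0})$, where the leading $\Pi(x)$ denotes an arbitrary $1$-periodic tropical meromorphic function and $\Theta(x,\Pi_{0})=\bigl(1+\tfrac{[x]([x]-1)}{2}\bigr)\Pi_{0}(x)$. Substituting $\Pi_{0}(x)=\Pi(x)-\Pi(0)$ back in gives $\Theta(x,\Pi_{0})=\bigl(1+\tfrac{[x]([x]-1)}{2}\bigr)(\Pi(x)-\Pi(0))$, which is precisely the function $\Theta(x,\Pi)$ appearing in the statement. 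Hence $f(x)=\Pi(x)+\Theta(x,\Pi)$; reversing the computation shows conversely that every function of this form solves the equation.

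I do not expect a genuine obstacle here: all the substance sits in Proposition \ref{P111}, and this corollary is merely a change-of-variable restatement of it. The only items deserving minimal care are checking that $\Pi-\Pi(0)$ indeed lies in the class $\Pi_{0}$ (immediate from $1$-periodicity and the normalization at $0$) and noting that the $1$-periodic summand in the general solution remains unconstrained, which is already part of the conclusion of Proposition \ref{P111}. One may also remark in passing that $\Theta(x,\Pi)$ is continuous and piecewise linear, hence tropical meromorphic, this being inherited directly from the corresponding property of $\Theta(x,\Pi_{0})$.
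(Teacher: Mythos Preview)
Your proposal is correct and is precisely the intended reduction: the paper gives no separate proof for this corollary, treating it as an immediate consequence of Proposition~\ref{P111} via the substitution $\Pi_{0}(x):=\Pi(x)-\Pi(0)$, which is exactly what you do. Your remarks on the notational overloading of $\Pi$ and on the tropical-meromorphic nature of $\Theta(x,\Pi)$ are appropriate minor clarifications.
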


\begin{proposition}\label{P1111}
Tropical meromorphic solutions of
\begin{equation}\label{E1111}
f(x+1)-f(x)=\Theta(x,\Pi_{0})
\end{equation}
satisfy
\begin{equation}\label{S1111}
f(x)=\Pi(x)+\Omega(x,\Pi_{0}),
\end{equation}
where
$\Omega(x)=\left([x-1]+\frac{[x][x-1][x-2]}{6}\right)\Pi_{0}(x)$.
\end{proposition}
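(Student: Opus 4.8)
The plan is to follow exactly the pattern established in Propositions \ref{P11} and \ref{P111}: exhibit $\Omega(x,\Pi_0)$ as an explicit particular solution of (\ref{E1111}), and then invoke the (by now routine) fact that the difference of any two solutions of $f(x+1)-f(x)=g(x)$ with $g$ $1$-periodic is itself $1$-periodic. So the whole content reduces to checking the telescoping identity
\begin{equation}\label{telescope}
\Omega(x+1,\Pi_0)-\Omega(x,\Pi_0)=\Theta(x,\Pi_0),
\end{equation}
together with the verification that $\Omega(x,\Pi_0)$ is continuous and piecewise linear.

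First I would write $\Omega(x,\Pi_0)=c(x)\,\Pi_0(x)$ with $c(x)=[x-1]+\tfrac{1}{6}[x][x-1][x-2]$, and similarly $\Theta(x,\Pi_0)=d(x)\Pi_0(x)$ with $d(x)=1+\tfrac12[x]([x]-1)$. Since $\Pi_0$ is $1$-periodic, $\Pi_0(x+1)=\Pi_0(x)$, so (\ref{telescope}) is equivalent to the purely combinatorial identity $c(x+1)-c(x)=d(x)$ for all $x$, i.e. to $\big([x]+\tfrac16([x]+1)[x]([x]-1)\big)-\big([x-1]+\tfrac16[x][x-1][x-2]\big)=1+\tfrac12[x]([x]-1)$, using $[x+1]=[x]+1$, $[x+1-1]=[x]$, $[x+1-2]=[x-1]$. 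Writing $n=[x]$, the left side is $\big(n-(n-1)\big)+\tfrac16\big((n+1)n(n-1)-n(n-1)(n-2)\big)=1+\tfrac16 n(n-1)\big((n+1)-(n-2)\big)=1+\tfrac16 n(n-1)\cdot 3=1+\tfrac12 n(n-1)$, which is exactly $d(x)$. This settles (\ref{telescope}).

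Second, continuity and piecewise linearity of $\Omega(x,\Pi_0)$: on each interval $[n,n+1)$ the coefficient $c(x)$ is the constant integer $c_n:=n-1+\tfrac16 n(n-1)(n-2)$, so $\Omega$ restricts to $c_n\Pi_0(x)$, which is piecewise linear there (it is a nonnegative — or at worst integer — multiple of the piecewise linear $1$-periodic $\Pi_0$, and $\Pi_0(n)=0$ by the convention $\Pi_0(0)=0$). At the integer point $x=n$ both one-sided limits equal $c_n\Pi_0(n)=c_{n-1}\Pi_0(n)=0$ (note $\Pi_0(n)=\Pi_0(0)=0$), so there is no jump and $\Omega$ is globally continuous; it is piecewise linear since it is so on each $[n,n+1)$ and the slopes are locally finite. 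Hence $\Omega(x,\Pi_0)$ is a genuine tropical meromorphic function and a particular solution of (\ref{E1111}); adding an arbitrary $1$-periodic $\Pi(x)$ gives the general solution, which is the asserted form (\ref{S1111}).

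I do not expect any real obstacle here — the author already signals this in the analogous Proposition \ref{P111} by writing ``This is an elementary computation.'' The only mild subtlety worth a sentence is the continuity check at integer points, which works precisely because of the normalization $\Pi_0(0)=0$ forcing $\Pi_0(n)=0$ for all $n\in\mathbf{Z}$; without that normalization the jumps $(c_n-c_{n-1})\Pi_0(n)$ would spoil continuity, exactly as in the cautionary Remark \ref{P11correm}. So the proof I would write is: verify $c(x+1)-c(x)=d(x)$ as above, note continuity/piecewise-linearity, conclude $\Omega$ is a special solution, and appeal to the standard $1$-periodic-difference argument for the general solution.
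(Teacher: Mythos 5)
Your proposal is correct and matches the paper's intent: the paper leaves Proposition \ref{P1111} as the same kind of elementary verification it records for Proposition \ref{P111}, namely checking that $\Omega(x,\Pi_{0})$ is a continuous, piecewise linear particular solution via the coefficient identity $c(x+1)-c(x)=d(x)$ and then adding the $1$-periodic solutions of the homogeneous equation. Your telescoping computation with $n=[x]$ and the continuity check at integers (using $\Pi_{0}(n)=0$) are exactly the details the paper suppresses, so there is nothing to add.
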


As an example for the graph $\Omega(x,\Pi_{0})$, see Fig $\ref{sdddd}$, where again $\Pi_{0}(x)=\pi^{(1,1)}(x)$.

\begin{figure}[h]
\begin{minipage}[t]{1\linewidth}
\centering
\includegraphics[width=0.5\textwidth]{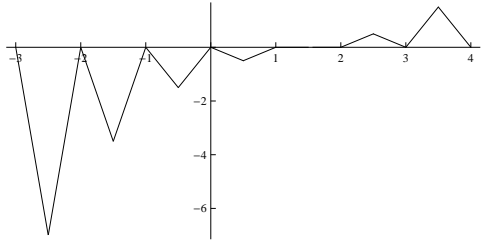}
\caption{$\Omega(x,\Pi_{0}),x\in[-3,4],\Pi_{0}(x)=\pi^{(1,1)}(x)$}\label{sdddd}
\end{minipage}
\hspace{0.2cm}
\end{figure}

Similarly as before, we again obtain

\begin{corollary}\label{P1111cor}
Tropical meromorphic solutions of
\begin{equation}\label{E1111cor}
f(x+1)-f(x)=\Theta(x,\Pi_{0})
\end{equation}
satisfy
\begin{equation}\label{S1111cor}
f(x)=\Pi(x)+\Omega(x,\Pi_{0}),
\end{equation}
where
$\Omega(x)=\left([x-1]+\frac{[x][x-1][x-2]}{6}\right)(\Pi_{0}(x)-\Pi_{0}(0))$.
\end{corollary}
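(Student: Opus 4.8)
The plan is to follow the same three-step template that underlies Propositions \ref{P11} and \ref{P111}: first produce $\Omega(x,\Pi_{0})$ as an explicit particular solution of (\ref{E1111}), then check that it is really a tropical meromorphic function, and finally reduce the general case to the homogeneous equation $f(x+1)-f(x)=0$, whose solution space consists exactly of the $1$-periodic tropical meromorphic functions $\Pi(x)$.

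First I would rewrite $\Omega$ in a form adapted to the unit shift. Using $[x-1]=[x]-1$ and $[x-2]=[x]-2$ one has $\Omega(x,\Pi_{0})=g([x])\,\Pi_{0}(x)$ with $g(n):=(n-1)+\frac{n(n-1)(n-2)}{6}$. Since $[x+1]=[x]+1$ and $\Pi_{0}$ is $1$-periodic, the first difference reads $\Omega(x+1,\Pi_{0})-\Omega(x,\Pi_{0})=\bigl(g([x]+1)-g([x])\bigr)\Pi_{0}(x)$, and an elementary computation gives $g(n+1)-g(n)=1+\frac{n(n-1)}{2}$: the linear part of $g$ contributes $1$, and the cubic part telescopes, $\frac{(n+1)n(n-1)-n(n-1)(n-2)}{6}=\frac{n(n-1)}{2}$. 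Hence $\Omega(x+1,\Pi_{0})-\Omega(x,\Pi_{0})=\bigl(1+\frac{[x]([x]-1)}{2}\bigr)\Pi_{0}(x)=\Theta(x,\Pi_{0})$, so $\Omega(x,\Pi_{0})$ is a special solution of (\ref{E1111}).

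The only point that really needs care is to verify that $\Omega(x,\Pi_{0})$ is itself a tropical meromorphic function, i.e. continuous and piecewise linear with real one-sided slopes. On each open interval $(n,n+1)$ the coefficient $g([x])$ equals the constant $g(n)$, so there $\Omega(x,\Pi_{0})=g(n)\Pi_{0}(x)$ is a constant multiple of a tropical meromorphic function, hence again tropical meromorphic (a negative value of $g(n)$ merely interchanges the roles of the zeros and poles of $\Pi_{0}$, which is harmless). At an integer point $x=n$ the coefficient jumps from $g(n-1)$ to $g(n)$, but this causes no discontinuity: by the standing convention $\Pi_{0}(0)=0$ and $1$-periodicity we have $\Pi_{0}(n)=0$, so both one-sided limits of $\Omega(x,\Pi_{0})$ at $x=n$ equal $0$, while the one-sided slopes are the real numbers $g(n-1)\Pi_{0}'(n^{-})$ and $g(n)\Pi_{0}'(n^{+})$. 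Thus $\Omega(x,\Pi_{0})$ is tropical meromorphic. This is exactly the reason why the corollary version must replace $\Pi_{0}$ by $\Pi-\Pi(0)$.

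Finally, if $F(x)$ is an arbitrary tropical meromorphic solution of (\ref{E1111}), then $F(x)-\Omega(x,\Pi_{0})$ is tropical meromorphic and satisfies the homogeneous equation $h(x+1)-h(x)=0$, so it is a $1$-periodic tropical meromorphic function $\Pi(x)$; this yields the asserted representation (\ref{S1111}). I do not expect a hidden obstacle beyond the bookkeeping of the floor identities and the periodicity of $\Pi_{0}$; the evaluation of $g(n+1)-g(n)$ and the continuity check at the integers are the only substantive steps.
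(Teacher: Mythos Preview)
Your proposal is correct and follows exactly the template the paper uses for the analogous Propositions \ref{P11}, \ref{P111}, and \ref{P1111} (which the paper dismisses as ``elementary'' or ``straightforward'' computations): exhibit the explicit particular solution via the floor-function coefficient, verify it is tropical meromorphic thanks to $\Pi_{0}(n)=0$ at integers, and reduce the general solution to the $1$-periodic homogeneous one. The paper gives no separate proof for this corollary, so your write-up simply fills in the omitted routine details.
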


We next introduce the notation $\Xi_{0}(x)$ to mean anti-$1$-periodic, $2$-periodic tropical meromorphic functions such that $\Xi_{0}(0)=0$. We now proceed to prove

\begin{proposition}\label{P1plus}
Tropical meromorphic solutions to
\begin{equation}\label{E1plus}
f(x+1)+f(x)=\Xi_{0}(x)
\end{equation}
satisfy
\begin{equation}\label{S1plus}
f(x)=\Xi (x)-[x]\Xi_{0}(x).
\end{equation}
\end{proposition}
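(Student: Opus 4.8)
The plan is to produce an explicit particular solution of \eqref{E1plus} and then reduce the determination of the general solution to the homogeneous equation $f(x+1)+f(x)=0$, whose solutions are exactly the functions $\Xi(x)$. For the particular solution I would guess $g(x):=-[x]\,\Xi_{0}(x)$, motivated by the fact that the right-hand side flips sign under $x\mapsto x+1$ while $[x]$ increases by $1$. Checking that $g$ works is a short computation: since $[x+1]=[x]+1$ and $\Xi_{0}(x+1)=-\Xi_{0}(x)$ by anti-$1$-periodicity, one gets $g(x+1)=-([x]+1)\bigl(-\Xi_{0}(x)\bigr)=([x]+1)\Xi_{0}(x)$, whence $g(x+1)+g(x)=([x]+1)\Xi_{0}(x)-[x]\Xi_{0}(x)=\Xi_{0}(x)$.

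The only point that really needs care — the main (though not deep) obstacle — is verifying that $g$ is a bona fide tropical meromorphic function, i.e. continuous and piecewise linear on $\mathbf{R}$. On each open interval $(n,n+1)$, $n\in\mathbf{Z}$, we have $g(x)=-n\,\Xi_{0}(x)$, a constant multiple of the tropical meromorphic function $\Xi_{0}$, so $g$ is piecewise linear with locally finitely many corners there. At an integer $n$ continuity amounts to $-(n-1)\Xi_{0}(n)=-n\,\Xi_{0}(n)$, i.e. $\Xi_{0}(n)=0$; and this holds for every $n\in\mathbf{Z}$ because the normalization $\Xi_{0}(0)=0$ together with anti-$1$-periodicity gives $\Xi_{0}(n)=(-1)^{n}\Xi_{0}(0)=0$. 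Hence $g$ has no jumps, only integer points as possible additional corners, and so $g(x)=-[x]\,\Xi_{0}(x)$ is tropical meromorphic.

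Finally, if $F$ is any tropical meromorphic solution of \eqref{E1plus}, then $h:=F-g$ satisfies $h(x+1)+h(x)=0$, that is $h(x+1)=h(x)^{\otimes(-1)}$, which is \eqref{eq1} with $c=-1$. From $h(x+1)=-h(x)$ one obtains $h(x+2)=-h(x+1)=h(x)$, so $h$ is anti-$1$-periodic and $2$-periodic; in other words $h(x)=\Xi(x)$, in accordance with Theorem \ref{ThmA}(2). Therefore $F(x)=\Xi(x)-[x]\,\Xi_{0}(x)$, which is \eqref{S1plus}, and conversely every such function solves \eqref{E1plus} by the computations above. This completes the proof.
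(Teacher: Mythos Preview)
Your proof is correct and follows essentially the same route as the paper: exhibit $-[x]\Xi_{0}(x)$ as a particular solution and reduce to the homogeneous equation whose solutions are the $\Xi(x)$. You supply more detail than the paper does, in particular the verification that $-[x]\Xi_{0}(x)$ is continuous at integers (using $\Xi_{0}(n)=(-1)^{n}\Xi_{0}(0)=0$), which the paper leaves as an ``elementary computation''.
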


\begin{proof} Elementary computation again verifies that $-[x]\Xi_{0}(x)$ is a special solution to (\ref{E1plus}). Moreover, if $F(x)$ is an arbitrary solution to (\ref{E1plus}), it is an easy exercise to see that $F(x)+[x]\Xi_{0}(x)$ is an anti-$1$-periodic, $2$-periodic tropical meromorphic function.
\end{proof}

As for the general case $\widetilde{\Xi}(x)$ of anti-$1$-periodic, $2$-periodic tropical meromorphic functions, it immediately follows by continuity and anti-$1$-periodicity that there exists $x_{0}$ such that $\widetilde{\Xi}(x_{0})=0$. Therefore, we obtain

\begin{proposition}\label{P2plus}
Tropical meromorphic solutions to
\begin{equation}\label{E2plus}
f(x+1)+f(x)=\widetilde{\Xi}(x),
\end{equation}
such that $\widetilde{\Xi}(x_{0})=0$, take the form
\begin{equation}\label{S2plus}
f(x)=\Xi (x)-[x-x_{0}]\widetilde{\Xi}(x).
\end{equation}
\end{proposition}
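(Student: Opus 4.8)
The plan is to follow the proof of Proposition~\ref{P1plus} verbatim, with the origin replaced by the zero $x_{0}$ of $\widetilde{\Xi}$. (Recall that such an $x_{0}$ exists, by continuity together with anti-$1$-periodicity, as observed just before the statement.) First I would exhibit $g(x):=-[x-x_{0}]\widetilde{\Xi}(x)$ as a particular solution of (\ref{E2plus}). Since $\widetilde{\Xi}$ is anti-$1$-periodic, $\widetilde{\Xi}(x+1)=-\widetilde{\Xi}(x)$, and $[x+1-x_{0}]=[x-x_{0}]+1$, so
\[
g(x+1)+g(x)=-\bigl([x-x_{0}]+1\bigr)\widetilde{\Xi}(x+1)-[x-x_{0}]\widetilde{\Xi}(x)=\bigl([x-x_{0}]+1\bigr)\widetilde{\Xi}(x)-[x-x_{0}]\widetilde{\Xi}(x)=\widetilde{\Xi}(x).
\]

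Next I would check that $g$ is genuinely tropical meromorphic, i.e. continuous and piecewise linear with real slopes. On any open interval disjoint from $x_{0}+\mathbf{Z}$ the factor $[x-x_{0}]$ is a fixed integer, so there $g$ is a constant multiple of the piecewise linear function $\widetilde{\Xi}$ and hence piecewise linear. At a jump point $x=x_{0}+n$, $n\in\mathbf{Z}$, anti-$1$-periodicity gives $\widetilde{\Xi}(x_{0}+n)=(-1)^{n}\widetilde{\Xi}(x_{0})=0$, so the jump of $[x-x_{0}]$ is multiplied by $0$ and $g$ is continuous there. This is the only place where the hypothesis $\widetilde{\Xi}(x_{0})=0$ is used, and it is exactly the analogue of the role played by $\Xi_{0}(0)=0$ in Proposition~\ref{P1plus}; I expect it to be the sole point requiring any care.

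Finally, to obtain all solutions, let $F$ be an arbitrary tropical meromorphic solution of (\ref{E2plus}) and put $H(x):=F(x)-g(x)=F(x)+[x-x_{0}]\widetilde{\Xi}(x)$, which is tropical meromorphic as a difference of such functions. Then $H(x+1)+H(x)=\widetilde{\Xi}(x)-\widetilde{\Xi}(x)=0$, so $H(x+1)=-H(x)$; thus $H$ is anti-$1$-periodic, hence $2$-periodic, and in the paper's notation $H(x)=\Xi(x)$. This yields $F(x)=\Xi(x)-[x-x_{0}]\widetilde{\Xi}(x)$, which is (\ref{S2plus}).
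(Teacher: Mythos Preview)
Your proof is correct and is essentially the same argument as the paper's, just unfolded: the paper performs the shift $x\mapsto x-x_{0}$ (setting $g(x):=f(x+x_{0})$, $\widetilde{\Xi_{0}}(x):=\widetilde{\Xi}(x+x_{0})$) to reduce directly to Proposition~\ref{P1plus}, whereas you redo that proposition's computation in place with the shifted bracket $[x-x_{0}]$. Both routes hinge on exactly the same point, namely that $\widetilde{\Xi}(x_{0}+n)=0$ for all $n\in\mathbf{Z}$ so that the particular solution is continuous.
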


\begin{proof} Defining $\widetilde{\Xi_{0}}(x):=\widetilde{\Xi}(x+x_{0})$ and $g(x):=f(x+x_{0})$, equation (\ref{E2plus}) takes the form
$$g(x+1)+g(x)=\widetilde{\Xi_{0}}(x),$$
where $\widetilde{\Xi_{0}}(0)=0$. By Proposition \ref{P1plus},
$$g(x)=\Xi (x)-[x]\widetilde{\Xi_{0}}(x)=\Xi (x)-[x]\widetilde{\Xi}(x+x_{0}),$$
hence
$$f(x)=g(x-x_{0})=\Xi (x-x_{0})-[x-x_{0}]\widetilde{\Xi}(x).$$
\end{proof}

\begin{proposition}\label{P3}
Given a $2$-periodic, anti-$1$-periodic tropical meromorphic function $\Xi (x)$, all tropical meromorphic solutions to $$f(x+1)-f(x)=\Xi (x)$$
are of the form $f(x)=\Pi (x)-\frac{1}{2}\Xi (x)$.
\end{proposition}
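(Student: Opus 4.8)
The plan is to find one explicit special solution and then argue that the general solution differs from it by a $1$-periodic function. For the special solution, since $\Xi(x)$ is $2$-periodic and anti-$1$-periodic (so $\Xi(x+1)=-\Xi(x)$), the natural ansatz is $f_0(x):=-\tfrac12\Xi(x)$: then $f_0(x+1)-f_0(x)=-\tfrac12\Xi(x+1)+\tfrac12\Xi(x)=\tfrac12\Xi(x)+\tfrac12\Xi(x)=\Xi(x)$, using anti-$1$-periodicity. I would also check that $-\tfrac12\Xi(x)$ is genuinely tropical meromorphic: it is continuous and piecewise linear since $\Xi$ is, and scaling by $-\tfrac12$ preserves these properties (the slopes get scaled, which is allowed in the real-slope setting of Laine--Tohge).

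Next I would handle uniqueness up to a period. If $F(x)$ is an arbitrary tropical meromorphic solution of $f(x+1)-f(x)=\Xi(x)$, set $g(x):=F(x)-f_0(x)=F(x)+\tfrac12\Xi(x)$. Then $g(x+1)-g(x)=\Xi(x)-\Xi(x)=0$, so $g$ is $1$-periodic. Also $g$ is tropical meromorphic as a difference of two such functions. Hence $g(x)=\Pi(x)$ in the paper's notation, and $F(x)=\Pi(x)-\tfrac12\Xi(x)$, which is exactly the claimed form. Conversely, every function of the form $\Pi(x)-\tfrac12\Xi(x)$ solves the equation, by the computation above together with $\Pi(x+1)-\Pi(x)=0$.

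There is essentially no obstacle here; this is a routine variation on Propositions \ref{P1} and \ref{P1plus}. The only mild subtlety worth a sentence is making sure that $-\tfrac12\Xi(x)$ (and more generally the sum $\Pi(x)-\tfrac12\Xi(x)$) really is a legitimate tropical meromorphic function rather than merely a piecewise-linear expression: this follows because the class of tropical meromorphic functions with arbitrary real slopes is closed under real scalar multiplication and under (tropical) addition, i.e. ordinary pointwise addition. Given the brevity of the analogous proofs in the excerpt, I would expect the authors to simply write ``Elementary computation verifies that $-\tfrac12\Xi(x)$ is a special solution, and any other solution differs from it by a $1$-periodic tropical meromorphic function.''
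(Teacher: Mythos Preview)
Your proposal is correct and matches the paper's own argument almost verbatim: the authors simply note that $-\tfrac12\Xi(x)$ is a special solution (a trivial computation using $\Xi(x+1)=-\Xi(x)$) and that any other tropical meromorphic solution differs from it by a $1$-periodic function. Your added remarks about closure of the class of tropical meromorphic functions under real scaling and addition are fine but more explicit than the paper, which leaves these points implicit.
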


\begin{proof} It is a trivial computation to verify that $f(x)=-\frac{1}{2}\Xi (x)$ is a special solution to $f(x+1)-f(x)=\Xi (x)$. One can also immediately see that whenever $f(x)$ is an arbitrary tropical meromorphic solution, then $f(x)+\frac{1}{2}\Xi (x)$ is $1$-periodic.
\end{proof}

\begin{proposition}\label{P31}
Given a $2$-periodic, anti-$1$-periodic tropical meromorphic function $\Xi (x)$ such that $\Xi (x_{0})=0$, then all tropical meromorphic solutions to $$f(x+1)-f(x)=[x-x_{0}]\Xi (x)$$ are of the form $f(x)=\Pi (x)-\frac{1}{2}[x-x_{0}]\Xi (x)+\frac{1}{4}\Xi (x).$
\end{proposition}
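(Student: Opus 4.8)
The plan is to follow the same two-step template used for Propositions \ref{P3}, \ref{P1plus} and their relatives: first exhibit the stated expression as a particular solution, then observe that any two tropical meromorphic solutions differ by a $1$-periodic function.

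For the first step I would set $f_{0}(x):=-\tfrac12[x-x_{0}]\Xi(x)+\tfrac14\Xi(x)$ and compute $f_{0}(x+1)-f_{0}(x)$ directly, using the two identities $[x+1-x_{0}]=[x-x_{0}]+1$ and --- crucially --- the anti-$1$-periodicity $\Xi(x+1)=-\Xi(x)$. The summand $\tfrac14\Xi(x)$ contributes $-\tfrac12\Xi(x)$ to the difference (exactly the computation already carried out in Proposition \ref{P3}), while the summand $-\tfrac12[x-x_{0}]\Xi(x)$ contributes $[x-x_{0}]\Xi(x)+\tfrac12\Xi(x)$; the two spurious $\tfrac12\Xi(x)$ terms cancel, leaving precisely $[x-x_{0}]\Xi(x)$. (One may alternatively derive $f_{0}$ rather than guess it: write $f=[x-x_{0}]\bigl(-\tfrac12\Xi(x)\bigr)+\psi(x)$, note from $-\tfrac12\Xi(x+1)=\tfrac12\Xi(x)$ that $\psi$ must satisfy $\psi(x+1)-\psi(x)=-\tfrac12\Xi(x)$, and apply Proposition \ref{P3} to obtain $\psi(x)=\tfrac14\Xi(x)+\Pi(x)$.)

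The one point that needs care --- and the only place the hypothesis $\Xi(x_{0})=0$ enters --- is checking that $f_{0}$ is itself tropical meromorphic, i.e. continuous and piecewise linear. Away from the points $x_{0}+n$, $n\in\mathbf{Z}$, the step function $[x-x_{0}]$ is locally constant, so there $f_{0}$ agrees with a constant multiple of $\Xi$ plus $\tfrac14\Xi$ and is piecewise linear. At a point $x_{0}+n$ the factor $[x-x_{0}]$ jumps by $1$, but $\Xi(x_{0}+n)=(-1)^{n}\Xi(x_{0})=0$ by anti-$1$-periodicity, so the jump of $[x-x_{0}]\Xi(x)$ there is $1\cdot 0=0$ and $f_{0}$ is continuous across $x_{0}+n$. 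Hence $f_{0}$ is a genuine tropical meromorphic special solution of the equation.

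For the second step, let $F$ be an arbitrary tropical meromorphic solution of $f(x+1)-f(x)=[x-x_{0}]\Xi(x)$. Then $g:=F-f_{0}$ is tropical meromorphic and satisfies $g(x+1)-g(x)=0$, so $g$ is $1$-periodic, i.e. $g(x)=\Pi(x)$; therefore $F(x)=\Pi(x)-\tfrac12[x-x_{0}]\Xi(x)+\tfrac14\Xi(x)$, as claimed. I expect no real obstacle: the whole argument is an elementary computation together with the continuity check at the integer translates of $x_{0}$, which is exactly what the assumption $\Xi(x_{0})=0$ is there to handle.
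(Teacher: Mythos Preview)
Your proposal is correct and follows exactly the template the paper uses throughout (and the paper's own proof here is merely ``This is a straightforward computation''): verify that $-\tfrac12[x-x_{0}]\Xi(x)+\tfrac14\Xi(x)$ is a special solution, then note that any two solutions differ by a $1$-periodic function. Your explicit continuity check at $x_{0}+n$ (using $\Xi(x_{0}+n)=(-1)^{n}\Xi(x_{0})=0$) is a welcome addition, since it makes clear where the hypothesis $\Xi(x_{0})=0$ is actually used.
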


\begin{proof} This is a straightforward computation.
\end{proof}

\begin{proposition}\label{P12}
All tropical meromorphic solutions to
$$f(x+1)+f(x)=\Pi (x),$$
where $\Pi (x)$ is tropical meromorphic and $1$-periodic, are of the form $f(x)=\Xi (x)+\frac{\Pi(x)}{2}.$
\end{proposition}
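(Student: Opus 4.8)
The plan is to follow the same two-step pattern as in the proof of Proposition \ref{P3}: produce one particular solution, then show that the difference of any two solutions is governed by the homogeneous equation, whose solutions are exactly the functions denoted $\Xi(x)$.

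First I would verify that $f_{0}(x):=\frac{1}{2}\Pi(x)$ is a particular tropical meromorphic solution. Since $\Pi$ is $1$-periodic, $\Pi(x+1)=\Pi(x)$, so $f_{0}(x+1)+f_{0}(x)=\frac{1}{2}\Pi(x)+\frac{1}{2}\Pi(x)=\Pi(x)$. Moreover $\frac{1}{2}\Pi(x)$ is still continuous and piecewise linear with real slopes (each slope of $\Pi$ is merely halved) and introduces no new poles, hence it is tropical meromorphic. This gives existence.

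For the converse, let $f$ be an arbitrary tropical meromorphic solution and set $g(x):=f(x)-\frac{1}{2}\Pi(x)$, which is again tropical meromorphic as a difference of tropical meromorphic functions. Then $g(x+1)+g(x)=\bigl(f(x+1)+f(x)\bigr)-\frac{1}{2}\bigl(\Pi(x+1)+\Pi(x)\bigr)=\Pi(x)-\Pi(x)=0$, so $g(x+1)=-g(x)$. Iterating once more yields $g(x+2)=g(x)$, so $g$ is anti-$1$-periodic and $2$-periodic; in the notation fixed in the introduction this means precisely $g(x)=\Xi(x)$ (this is case (2) of Theorem \ref{ThmA}, or may be read off directly). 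Hence $f(x)=\Xi(x)+\frac{1}{2}\Pi(x)$, and conversely every function of this form solves the equation by the computation in the first step, using anti-$1$-periodicity of $\Xi$ and $1$-periodicity of $\Pi$.

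I do not expect any real obstacle here; the only points needing a word of care are the routine bookkeeping that a positive scalar multiple and a sum of tropical meromorphic functions remain tropical meromorphic (immediate from the description as continuous piecewise linear functions with real slopes and no added poles), and the identification of the homogeneous solution class with the symbol $\Xi(x)$, which is exactly Theorem \ref{ThmA}(2).
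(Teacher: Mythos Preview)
Your proof is correct and follows exactly the same approach as the paper: exhibit $\tfrac{1}{2}\Pi(x)$ as a particular solution and identify the homogeneous solutions of $f(x+1)+f(x)=0$ with the anti-$1$-periodic, $2$-periodic functions $\Xi(x)$. The paper's proof is merely a terser version of what you wrote.
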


\begin{proof} Clearly, the solutions to $f(x+1)+f(x)=0$ are $2$-periodic, anti-$1$-periodic functions, while $\frac{\Pi (x)}{2}$ is a special solution to $f(x+1)+f(x)=\Pi (x)$.
\end{proof}

\begin{proposition}\label{P2}
Provided $a\neq\alpha$, all tropical meromorphic solutions to
$$f(x+1)-af(x)=e_{\alpha}(x)$$
are of the form $f(x)=G(x)+\frac{1}{\alpha -a}e_{\alpha}(x)$, where $G(x)$ stands for the solutions of the homogeneous equation $f(x+1)-af(x)=0$, as given in Theorem \ref{ThmA}.
\end{proposition}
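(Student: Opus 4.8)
The plan is to use the familiar \textquotedblleft particular solution plus general homogeneous solution\textquotedblright\ decomposition already exploited in Propositions \ref{P1}--\ref{P12}. First I would produce a particular solution. Since $a\neq\alpha$, set $f_{0}(x):=\frac{1}{\alpha-a}\,e_{\alpha}(x)$; this is a well-defined tropical meromorphic function, because $e_{\alpha}(x)$ is one (tropical entire when $\alpha>1$, tropical meromorphic when $\alpha<-1$, with $\alpha\neq0,\pm1$ implicit in the very notation $e_{\alpha}$) and multiplying a continuous piecewise linear function with real slopes by the real constant $\frac{1}{\alpha-a}$ again yields such a function, only interchanging the roles of roots and poles when $\frac{1}{\alpha-a}<0$. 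Using the identity $e_{\alpha}(x+1)=\alpha e_{\alpha}(x)$ recorded in the introduction,
$$f_{0}(x+1)-af_{0}(x)=\frac{1}{\alpha-a}\left(e_{\alpha}(x+1)-ae_{\alpha}(x)\right)=\frac{\alpha-a}{\alpha-a}\,e_{\alpha}(x)=e_{\alpha}(x),$$
so $f_{0}$ solves the equation.

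Next I would reduce an arbitrary solution to the homogeneous one. If $f$ is any tropical meromorphic solution of $f(x+1)-af(x)=e_{\alpha}(x)$, put $G(x):=f(x)-f_{0}(x)$. As a difference of two continuous piecewise linear functions with real slopes, $G$ is again tropical meromorphic, and by linearity of the operator sending $u$ to $u(x+1)-au(x)$,
$$G(x+1)-aG(x)=\left(f(x+1)-af(x)\right)-\left(f_{0}(x+1)-af_{0}(x)\right)=e_{\alpha}(x)-e_{\alpha}(x)=0,$$
i.e. $G$ solves $f(x+1)-af(x)=0$, which is $y(x+1)=y(x)^{\otimes a}$. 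Conversely, for every tropical meromorphic solution $G$ of the homogeneous equation, $G+f_{0}$ is tropical meromorphic and solves the inhomogeneous equation. Thus $f\mapsto f-f_{0}$ is a bijection from the solution set of the inhomogeneous equation onto that of the homogeneous equation, giving exactly $f(x)=G(x)+\frac{1}{\alpha-a}e_{\alpha}(x)$.

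Finally I would invoke Theorem \ref{ThmA} with $c=a$ to describe $G$: it is $\Pi(x)$ (1-periodic) if $a=1$, it is $\Xi(x)$ (2-periodic, anti-1-periodic) if $a=-1$, and it is a finite linear combination $L_{b}(e_{a}(x-b))$ of tropical exponentials if $a\neq\pm1$. I do not expect a genuine obstacle here: the proposition is essentially linear algebra over the (classical) additive group of continuous piecewise linear functions. The one point worth a sentence of care is precisely that the operations used---forming $e_{\alpha}$, scaling it by the real number $\frac{1}{\alpha-a}$, and subtracting two such functions---never leave the class of tropical meromorphic functions, and that the well-definedness of $f_{0}$ needs only $a\neq\alpha$ together with $\alpha\neq0,\pm1$.
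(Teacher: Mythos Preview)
Your proof is correct and follows exactly the same line as the paper's: produce the particular solution $\frac{1}{\alpha-a}e_{\alpha}(x)$ via the identity $e_{\alpha}(x+1)=\alpha e_{\alpha}(x)$, then observe that any solution minus this particular one solves the homogeneous equation $G(x+1)-aG(x)=0$, whose solutions are classified by Theorem~\ref{ThmA}. Your write-up is simply more detailed than the paper's, which dispatches both steps in a sentence each.
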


\begin{proof} To determine the special solution of the form $Ke_{\alpha}(x)$, it is sufficient to substitute this into $f(x+1)-af(x)=e_{\alpha}(x)$, and recall that $e_{\alpha}(x+1)=\alpha e_{\alpha}(x)$. Moreover, if $F(x)$ is an arbitrary tropical meromorphic solution to $f(x+1)-af(x)=e_{\alpha}(x)$, then an elementary computation shows that $G(x):=F(x)-\frac{1}{\alpha -a}e_{\alpha}(x)$ satisfies $G(x+1)-aG(x)=0$.
\end{proof}

\begin{remark}\label{P2rem} Clearly, all tropical meromorphic solutions to
$$f(x+1)-af(x)=Ae_{\alpha}(x)$$
with a constant multiplier $A$ are of the form $f(x)=G(x)+\frac{A}{\alpha -a}e_{\alpha}(x)$.
\end{remark}

\begin{proposition}\label{P25}
Provided that $|\alpha |\neq 1$, and that $\alpha <0$, then all tropical meromorphic solutions to
$$f(x+1)-\alpha f(x)=e_{\alpha}(x)$$
are of the form $f(x)=L_{b}(e_{\alpha}(x-b))+\frac{1}{\alpha}[x-x_{0}]e_{\alpha}(x)$, where $e_{\alpha}(x_{0})=0$.
\end{proposition}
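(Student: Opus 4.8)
The plan is to treat this as the \emph{resonant} analogue of Proposition \ref{P2}. When $a=\alpha$ the particular solution $\tfrac{1}{\alpha-a}e_{\alpha}(x)$ of Proposition \ref{P2} degenerates, so it must be replaced by a ``discrete primitive'' of a constant, i.e.\ by a multiple of $e_{\alpha}(x)$ by the step function $[x-x_{0}]$; the role of the integer part here is exactly analogous to the role played by the factor $[x]$ in Propositions \ref{P11}, \ref{P111}, \ref{P1plus}. Note that the naive candidate $x\,e_{\alpha}(x)$ is \emph{not} tropical meromorphic (it is not piecewise linear), which is why one is forced to use $[x-x_{0}]$.

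First I would locate a zero of $e_{\alpha}$. Using the explicit expressions for $e_{\alpha}$ recalled in the introduction, on each interval $[n,n+1)$ one has $e_{\alpha}(x)=\alpha^{n}\bigl(x-n+\tfrac{1}{\alpha-1}\bigr)$ when $|\alpha|>1$ and $e_{\alpha}(x)=\alpha^{n}\bigl(\tfrac{1}{1-\alpha}-x+n\bigr)$ when $0<|\alpha|<1$; since $\alpha<0$, in both cases the affine factor changes sign on $[n,n+1)$ and vanishes precisely at $x=n+\tfrac{1}{1-\alpha}$. Hence $x_{0}:=\tfrac{1}{1-\alpha}\in(0,1)$ satisfies $e_{\alpha}(x_{0})=0$, and then $e_{\alpha}(x_{0}+m)=\alpha^{m}e_{\alpha}(x_{0})=0$ for every $m\in\mathbf{Z}$ by the relation $e_{\alpha}(x+1)=\alpha e_{\alpha}(x)$; also $x_{0}\notin\mathbf{Z}$, so the breakpoints of $[x-x_{0}]$ and those of $e_{\alpha}$ do not collide.

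Next I would check that $g_{0}(x):=\tfrac{1}{\alpha}[x-x_{0}]\,e_{\alpha}(x)$ is a tropical meromorphic particular solution. On every interval where $[x-x_{0}]$ is constant, $g_{0}$ is a constant multiple of $e_{\alpha}$, hence continuous and piecewise linear with isolated breakpoints; the only possible discontinuities sit at the jumps $x=x_{0}+m$ of $[x-x_{0}]$, where the jump of $g_{0}$ equals $\tfrac{1}{\alpha}e_{\alpha}(x_{0}+m)=0$ by the previous step. So $g_{0}$ is continuous, hence tropical meromorphic. Then, using $[x+1-x_{0}]=[x-x_{0}]+1$ and $e_{\alpha}(x+1)=\alpha e_{\alpha}(x)$,
$$g_{0}(x+1)-\alpha g_{0}(x)=\bigl([x-x_{0}]+1\bigr)e_{\alpha}(x)-[x-x_{0}]e_{\alpha}(x)=e_{\alpha}(x),$$
so $g_{0}$ indeed solves the equation.

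Finally, for an arbitrary tropical meromorphic solution $F$, the difference $G:=F-g_{0}$ satisfies the homogeneous equation $G(x+1)=\alpha G(x)=G(x)^{\otimes\alpha}$; since $\alpha\neq\pm1$, Theorem \ref{ThmA}(3) forces $G(x)=L_{b}(e_{\alpha}(x-b))$, whence $F(x)=L_{b}(e_{\alpha}(x-b))+\tfrac{1}{\alpha}[x-x_{0}]e_{\alpha}(x)$, as claimed. The main obstacle is precisely the verification that $g_{0}$ is genuinely tropical meromorphic, i.e.\ continuous at the jump points of the step function $[x-x_{0}]$; this is exactly where the hypothesis $\alpha<0$ enters, since it both produces a zero $x_{0}$ of $e_{\alpha}$ and, via $e_{\alpha}(x+1)=\alpha e_{\alpha}(x)$, guarantees that $e_{\alpha}$ vanishes at every integer translate of $x_{0}$, so that no hidden discontinuity appears. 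Everything else is routine computation.
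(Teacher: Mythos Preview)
Your proof is correct and follows exactly the same approach as the paper's proof, only with much more detail: the paper merely states that $[x-x_{0}]e_{\alpha}(x)$ is tropical meromorphic as ``an easy observation'' and dismisses the rest as ``a trivial computation,'' while you actually carry out the continuity check at the jumps of $[x-x_{0}]$ and the verification that $g_{0}$ solves the equation. Your explicit identification of $x_{0}=\tfrac{1}{1-\alpha}$ matches what the paper records separately in Remark~\ref{P25rem}.
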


\begin{proof} It is an easy observation to see that $[x-x_{0}]e_{\alpha}(x)$ is tropical meromorphic. What remains is a trivial computation.
\end{proof}

\begin{remark}\label{P25rem} Observe that we may assume that $x_{0}=\frac{1}{1-\alpha}\in (0,1)$ in this proposition. Indeed, all roots of the tropical exponential $e_{\alpha}(x)$ form the set $x_{0}+\mathbf{Z}$. If now $x_{1},x_{2}\in x_{0}+\mathbf{Z}$, then their difference is a solution to $f(x+1)-\alpha f(x)=0$ as one may immediately see.
\end{remark}

\begin{remark}\label{P25rem2} It remains open whether equation $f(x+1)-\alpha f(x)=e_{\alpha}(x)$ admits tropical meromorphic solutions, if $|\alpha |\neq 1$, and $\alpha >0$. Indeed, if $f$ is such a solution, then $f(x+2)-2\alpha f(x+1)+\alpha^{2}f(x)=0$. By Theorem \ref{ThmB}(4), $f(x)\in L_{b}(e_{\alpha}(x-b))$ are solutions to $f(x+2)-2\alpha f(x+1)+\alpha^{2}f(x)=0$, but not to $f(x+1)-\alpha f(x)=e_{\alpha}(x)$. Existence of tropical meromorphic solutions of different type to $f(x+2)-2\alpha f(x+1)+\alpha^{2}f(x)=0$ remains open, see Remark \ref{ThmBrem}.
\end{remark}

We next define a special tropical meromorphic function $\Psi (x)$ as follows:
\begin{equation}
\Psi(x):=\left\{
 \begin{array}{r@{\;,\;}l}
\sum_{j=0}^{[x]}\max(0,x-j) &x\geq 1,\\
\sum_{j=0}^{[x]+1}\max(0,x-j) & 0\leq x<1,\\
x+\sum_{j=[x]}^{0}\max(0,-x+j) & x<0.\\
 \end{array}
 \right.
\end{equation}
The idea to apply such a tropical meromorphic function goes back to Tohge, see \cite{Tohge}, p. 133. The importance of this function becomes immediately clear below. For practical computations, $\Psi (x)$ may be represented as follows:
\begin{equation}\label{psi2}
\Psi (x)=([x]+1)x-\frac{1}{2}[x]([x]+1).
\end{equation}
It is an elementary computation to see that these two representations for $\Psi (x)$ are identical.  As for the graph of $\Psi (x)$ for $x\in[-3,3]$, see Fig \ref{goodnesskill}.

\begin{figure}[h]
\begin{minipage}[t]{1\linewidth}
\centering
\includegraphics[width=0.5\textwidth]{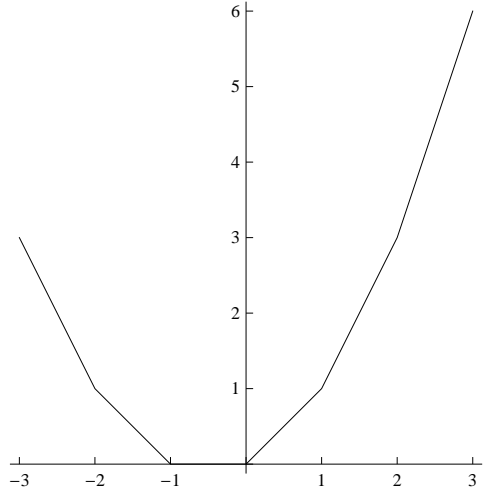}
\caption{$\Psi(x),x\in[-3,3]$}\label{goodnesskill}
\end{minipage}
\hspace{0.2cm}
\end{figure}

\begin{proposition}\label{P4} The tropical meromorphic function $\Psi (x)$ is tropical entire of order $\rho(\Psi)=2$ and satisfies the difference equation $\Psi(x)-\Psi(x-1)=x$.
\end{proposition}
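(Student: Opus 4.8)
The plan is to base everything on the closed-form expression (\ref{psi2}), namely $\Psi(x)=([x]+1)x-\tfrac12[x]([x]+1)$, which (as already noted) agrees with the piecewise definition; this avoids handling the three cases separately. First I will verify the difference equation: since $[x-1]=[x]-1$ for all $x$, substituting into (\ref{psi2}) gives $\Psi(x-1)=[x](x-1)-\tfrac12([x]-1)[x]$, and a short regrouping yields $\Psi(x)-\Psi(x-1)=\bigl(([x]+1)x-[x](x-1)\bigr)-\tfrac12[x]\bigl(([x]+1)-([x]-1)\bigr)=(x+[x])-[x]=x$, as claimed.

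Next I will check that $\Psi$ is tropical entire. On each interval $[n,n+1)$ with $n\in\mathbf{Z}$ one has $[x]=n$, so (\ref{psi2}) reduces to the affine map $x\mapsto(n+1)x-\tfrac12 n(n+1)$ of slope $n+1$; thus $\Psi$ is piecewise linear. Continuity at an integer $n$ follows because the left limit $n\cdot n-\tfrac12(n-1)n=\tfrac12 n(n+1)$ equals the value $\Psi(n)=(n+1)n-\tfrac12 n(n+1)=\tfrac12 n(n+1)$. The only slope discontinuities are therefore at the integers, and there, in the notation of Definition \ref{zeropole}, $\omega_{\Psi}(n)=(n+1)-n=1>0$. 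Hence each integer is a root of $\Psi$ of multiplicity one and $\Psi$ has no poles, i.e.\ $\Psi$ is tropical entire.

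Finally I will compute the order. Since $\Psi$ has no poles, $N(r,\Psi)=0$ and $T(r,\Psi)=m(r,\Psi)=\tfrac12\bigl(\Psi^{+}(r)+\Psi^{+}(-r)\bigr)$. Iterating the difference equation from $\Psi(0)=0$ gives $\Psi(n)=\tfrac12 n(n+1)$ for $n\in\mathbf{Z}_{+}$; since $\Psi$ is increasing on $[0,\infty)$ (all the relevant slopes are positive), for $r\geq 1$ one gets
\[
\tfrac12[r]([r]+1)=\Psi([r])\leq\Psi(r)\leq\Psi([r]+1)=\tfrac12([r]+1)([r]+2),
\]
so $\Psi^{+}(r)=\Psi(r)\asymp r^{2}$; an analogous elementary estimate (using (\ref{psi2}) with $[{-r}]=-[r]-1$ for non-integer $r$, or the third line of the definition of $\Psi$) gives $\Psi^{+}(-r)=\Psi(-r)\asymp r^{2}$. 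Therefore $T(r,\Psi)\asymp r^{2}$, whence $\rho(\Psi)=\limsup_{r\to\infty}\frac{\log T(r,\Psi)}{\log r}=2$.

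I expect no serious obstacle here: the only points requiring a little care are the floor bookkeeping for negative arguments in the last estimate and the routine verification that $\Psi(\pm r)\geq 0$ for all large $r$, so that $\Psi^{+}(\pm r)=\Psi(\pm r)$ there.
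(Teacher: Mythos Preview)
Your proof is correct and follows essentially the same route as the paper: both work from the closed form (\ref{psi2}) to verify the difference equation, check continuity and the positive slope jump $+1$ at each integer, and then read off $\rho(\Psi)=2$ from $T(r,\Psi)=m(r,\Psi)=\tfrac12(\Psi^{+}(r)+\Psi^{+}(-r))\asymp r^{2}$. Your sandwiching argument for the order via $\Psi([r])\leq\Psi(r)\leq\Psi([r]+1)$ is a touch more explicit than the paper's ``readily follows'', but the substance is identical.
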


\begin{proof} It is immediate to verify that $\Psi (x)$ satisfies the asserted difference equation. Indeed,
$$\Psi (x)-\Psi (x-1)=([x]+1)x-\frac{1}{2}[x]([x]+1)-([x](x-1)-\frac{1}{2}[x]([x]-1))=x.$$
Next we see that $\Psi(x)$ is a continuous piecewise linear function: Suppose that $0<\varepsilon <1$. Then
$$\Psi ([x]+\varepsilon )=([x]+1)([x]+\varepsilon )-\frac{1}{2}[x]([x]+1)\rightarrow ([x]+1)[x]-\frac{1}{2}[x]([x]+1)$$
as $\varepsilon\rightarrow 0$ and
$$\Psi ([x]-\varepsilon )=[x]([x]-\varepsilon )-\frac{1}{2}[x]([x]-1)\rightarrow [x][x]-\frac{1}{2}[x]([x]-1);$$
the limits are equal as one may readily check. The difference of the slopes around each integer, say $[x]$, equals to $[x]+1-[x]=1$, hence $\Psi (x)$ has no poles, being a tropical entire function. By elementary computation from $T(r,\Psi )=m(r,\Psi )=\frac{1}{2}(([r]+1)r-\frac{1}{2}[r]([r]+1)+([-r]+1)(-r)-\frac{1}{2}[-r]([-r]+1))$, it readily follows that $\rho(\Psi)=2$, completing the proof.
\end{proof}

\begin{proposition}\label{P5}
All tropical meromorphic solutions to
$$f(x+1)-f(x)=cx, \quad c\in\mathbf{R}$$
are of the form $f(x)=\Pi (x)+c(\Psi (x)-x).$
\end{proposition}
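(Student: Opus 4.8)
The plan is to follow the same pattern as Proposition \ref{P1}: produce one explicit particular solution and then reduce the general solution to the homogeneous equation, whose tropical meromorphic solutions are exactly the $1$-periodic functions $\Pi(x)$.

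First I would invoke Proposition \ref{P4}, which gives $\Psi(x)-\Psi(x-1)=x$; replacing $x$ by $x+1$ yields $\Psi(x+1)-\Psi(x)=x+1$. Setting $f_{0}(x):=c(\Psi(x)-x)$, one then computes
$$f_{0}(x+1)-f_{0}(x)=c\bigl(\Psi(x+1)-\Psi(x)\bigr)-c\bigl((x+1)-x\bigr)=c(x+1)-c=cx,$$
so $f_{0}$ is a particular solution. At this point I would record that $\Psi(x)-x$ is genuinely tropical meromorphic — in fact tropical entire — since $\Psi$ is tropical entire by Proposition \ref{P4} and subtracting the linear function $x$ alters neither continuity nor piecewise linearity nor the value of $\omega$ at any corner, hence creates no poles. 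Consequently every function $\Pi(x)+c(\Psi(x)-x)$ is tropical meromorphic, and, using $\Pi(x+1)-\Pi(x)=0$, it solves $f(x+1)-f(x)=cx$.

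For the converse, let $F$ be an arbitrary tropical meromorphic solution of $f(x+1)-f(x)=cx$ and put $g(x):=F(x)-c(\Psi(x)-x)$. Then $g$ is tropical meromorphic and $g(x+1)-g(x)=cx-cx=0$, so $g$ is $1$-periodic; writing $g(x)=\Pi(x)$ gives $F(x)=\Pi(x)+c(\Psi(x)-x)$, as claimed.

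The argument is essentially routine, and I do not expect a real obstacle; the only point needing a moment of care is confirming that the candidate family lies inside the class of tropical meromorphic functions, which is exactly what the tropical entireness of $\Psi$ established in Proposition \ref{P4} provides.
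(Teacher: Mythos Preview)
Your argument is correct and is precisely the paper's approach: verify that $c(\Psi(x)-x)$ is a particular solution via $\Psi(x+1)-\Psi(x)=x+1$, and then reduce to the homogeneous equation to pick up the $1$-periodic term $\Pi(x)$. The paper's own proof is in fact the one-line version of what you wrote, so there is nothing to add.
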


\begin{proof} It is, again, straightforward to verify that $f(x)=c(\Psi (x)-x)$ is a special solution by substituting and making use of the identity $\Psi (x+1)-\Psi (x)=x+1$.
\end{proof}

\begin{proposition}\label{P6}
All tropical meromorphic solutions to
\begin{equation}\label{1}
f(x+1)-f(x)=\Psi(x)
\end{equation}
are of the form $f(x)=\Pi (x)+\Upsilon (x),$
where $\Upsilon (x):=\frac{1}{6}[x]([x]+1)(2(x-[x])+x-1).$
\end{proposition}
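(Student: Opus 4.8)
The plan is to follow the template already used for Propositions \ref{P1}, \ref{P5} and \ref{P11}: first check that the explicitly given $\Upsilon$ is one particular tropical meromorphic solution of (\ref{1}), and then observe that any two solutions differ by a solution of the homogeneous equation $g(x+1)-g(x)=0$, i.e.\ by a $1$-periodic tropical meromorphic function $\Pi(x)$.

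For the verification that $\Upsilon$ solves (\ref{1}) I would substitute the closed form $\Psi(x)=([x]+1)x-\tfrac12[x]([x]+1)$ from (\ref{psi2}) and work interval by interval. On $[n,n+1)$ with $n\in\mathbf{Z}$ we have $[x]=n$, so
$$\Upsilon(x)=\tfrac16 n(n+1)(3x-2n-1),\qquad \Upsilon(x+1)=\tfrac16(n+1)(n+2)(3x-2n),$$
and a one-line computation yields $\Upsilon(x+1)-\Upsilon(x)=\tfrac12(n+1)(2x-n)$, which is exactly $\Psi(x)=(n+1)x-\tfrac12 n(n+1)$ on that interval. This is also the natural guess: by Proposition \ref{P4}, $\Psi$ is itself an ``antidifference'' of $x$, so $\Upsilon$ should be a second antidifference, hence roughly cubic in $[x]$.

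The one point that deserves care is checking that $\Upsilon$ is genuinely tropical meromorphic, that is, continuous and piecewise linear. On each $[n,n+1)$ the function $\Upsilon$ is affine in $x$ with slope $\tfrac12 n(n+1)$, so piecewise linearity is clear; for continuity at an integer $n$ one compares the limit $\tfrac16(n-1)n(n+1)$ of the expression valid on $[n-1,n)$ with the limit $\tfrac16 n(n+1)(n-1)$ of the expression valid on $[n,n+1)$, and these agree. Incidentally the slope jumps by $n$ at $x=n$, so $\Upsilon$ has roots at the positive integers and poles at the negative ones, but this is irrelevant for the statement. I expect this continuity check to be the main---indeed essentially the only---obstacle, and it is entirely routine.

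Finally, if $F$ is any tropical meromorphic solution of (\ref{1}), then $g:=F-\Upsilon$ is tropical meromorphic and satisfies $g(x+1)-g(x)=0$, so $g$ is $1$-periodic; thus $g(x)=\Pi(x)$ and $F(x)=\Pi(x)+\Upsilon(x)$. Conversely every such $f$ solves (\ref{1}), since $\Pi(x+1)-\Pi(x)=0$. This yields the asserted description of all solutions.
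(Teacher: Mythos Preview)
Your proof is correct and follows exactly the approach the paper indicates: the paper's proof is the single sentence ``Recalling the definition of $\Psi$ in (\ref{psi2}), this is a straightforward computation,'' and you have simply carried out that computation in full, verifying that $\Upsilon$ is a particular tropical meromorphic solution and that any solution differs from it by a $1$-periodic function.
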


\begin{proof} Recalling the definition of $\Psi$ in (\ref{psi2}), this is a straightforward computation.
\end{proof}

\section{Tropical difference Fermat type equations with two terms}\label{F2}

In this short section, we solve equation (\ref{kppcl}):
$$\beta f(x+1)+\alpha f(x)=1$$
for tropical meromorphic solutions:
\begin{theorem}\label{thlloo}
The tropical meromorphic solutions of $(\ref{kppcl})$ satisfy:\\
(i) If $\alpha=\beta$, then $f(x)=\Xi (x)+\frac{1}{2\beta},$\\
(ii) if $\alpha=-\beta$, then $f(x)=\Pi (x)+\frac{1}{\beta}x,$\\
(iii) if $\alpha\neq\pm\beta$, then $f(x)=L_{b}(e_{-\frac{\alpha}{\beta}}(x-b))+\frac{1}{\alpha +\beta}$.
\end{theorem}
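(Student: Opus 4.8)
The plan is to handle all three cases by the same two-step recipe: exhibit an explicit particular solution of the inhomogeneous equation $(\ref{kppcl})$, subtract it off to obtain a homogeneous linear difference equation, and then quote the relevant classification from Section \ref{lemmas}. Throughout one assumes $\alpha\neq 0$ and $\beta\neq 0$, since otherwise $(\ref{kppcl})$ forces $f$ to be constant and there is nothing to prove. In case (ii), when $\alpha=-\beta$, equation $(\ref{kppcl})$ reads $\beta\bigl(f(x+1)-f(x)\bigr)=1$, i.e. $f(x+1)-f(x)=\frac{1}{\beta}$, so Proposition \ref{P1} applied with $c=\frac{1}{\beta}$ gives $f(x)=\Pi(x)+\frac{1}{\beta}x$ at once. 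In case (i), when $\alpha=\beta$, equation $(\ref{kppcl})$ becomes $f(x+1)+f(x)=\frac{1}{\beta}$; since the constant $\frac{1}{\beta}$ is $1$-periodic, Proposition \ref{P12} applies with $\Pi(x)\equiv\frac{1}{\beta}$ and yields $f(x)=\Xi(x)+\frac{1}{2\beta}$. (Equivalently: $\frac{1}{2\beta}$ is a particular solution, and $g:=f-\frac{1}{2\beta}$ satisfies $g(x+1)=g(x)^{\otimes(-1)}$, so $g=\Xi(x)$ by Theorem \ref{ThmA}(2).)

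In case (iii), when $\alpha\neq\pm\beta$, the hypothesis $\alpha\neq-\beta$ makes $c_{0}:=\frac{1}{\alpha+\beta}$ a well-defined constant, and it is visibly a particular solution of $(\ref{kppcl})$. Setting $g(x):=f(x)-c_{0}$ turns $(\ref{kppcl})$ into the homogeneous equation $\beta g(x+1)+\alpha g(x)=0$, that is, $g(x+1)=g(x)^{\otimes(-\alpha/\beta)}$ in tropical notation, i.e. equation $(\ref{eq1})$ with $c=-\alpha/\beta$. Here $c\neq 0$, while the two hypotheses $\alpha\neq\beta$ and $\alpha\neq-\beta$ translate precisely into $c\neq -1$ and $c\neq 1$. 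Hence Theorem \ref{ThmA}(3) identifies every tropical meromorphic solution $g$ as a finite linear combination $L_{b}(e_{c}(x-b))$ with $b\subset[0,1)$, and adding $c_{0}$ back gives $f(x)=L_{b}(e_{-\alpha/\beta}(x-b))+\frac{1}{\alpha+\beta}$, as claimed.

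All the computations involved are elementary, so there is no real obstacle here beyond bookkeeping; the substance of the theorem is already contained in Propositions \ref{P1}, \ref{P12} and Theorem \ref{ThmA}. The only points deserving a word of care are the conversion between the classical form $g(x+1)=cg(x)$ and the tropical form $g(x+1)=g(x)^{\otimes c}$ needed to invoke Theorem \ref{ThmA}, and the fact that the ``moreover'' part of Theorem \ref{ThmA} is used in full, i.e. as a classification of \emph{all} tropical meromorphic solutions without any a priori hyper-order restriction. One should also note that subtracting a constant (or, in case (ii), the linear function $\frac{1}{\beta}x$) from $f$ keeps it tropical meromorphic, so by linearity the reduction to the homogeneous equation neither loses nor creates solutions, and the displayed formulas indeed describe all tropical meromorphic solutions of $(\ref{kppcl})$.
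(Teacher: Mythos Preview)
Your proof is correct and follows essentially the same approach as the paper: reduce each case to the corresponding homogeneous equation (via Propositions \ref{P12}, \ref{P1}, and Theorem \ref{ThmA} respectively) after identifying the obvious particular solution. Your additional remarks on why the reduction preserves the class of tropical meromorphic solutions and on how the hypotheses $\alpha\neq\pm\beta$ translate into $c\neq\pm1$ for Theorem \ref{ThmA}(3) are welcome clarifications, but the substance is identical.
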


\begin{proof} If $\alpha=\beta$, then (\ref{kppcl}) takes the form
$$f(x+1)+f(x)=\frac{1}{\beta}.$$
The claim now follows from Proposition \ref{P12}.

If $\alpha=-\beta$, then $f(x+1)-f(x)=\frac{1}{\beta}$, and an immediate application of Proposition \ref{P1} proves the claim.

As for the general case $\alpha\neq\pm\beta$, we may rewrite $(\ref{kppcl})$ as
$$f(x+1)+\frac{\alpha}{\beta}f(x)=1.$$
Now, the general solution $L_{b}(e_{-\frac{\alpha}{\beta}}(x-b))$ to the homogeneous equation $f(x+1)+\frac{\alpha}{\beta}f(x)=0$ follows from Theorem \ref{ThmA}. Finally, a special constant solution $\frac{1}{\alpha +\beta}$ to $f(x+1)+\frac{\alpha}{\beta}f(x)=1$ is nothing but a trivial observation.
\end{proof}

\section{Tropical difference Fermat type equations with three terms}\label{F3}
We next proceed to considering tropical difference Fermat type functional equations with three terms, such as
\begin{equation}\label{bu}
y(x)^{\otimes n}\otimes y(x+1)^{\otimes m}\otimes y(x+2)^{\otimes p}=1.
\end{equation}
In the classical notation, we have
\begin{equation}\label{bucl}
ny(x)+my(x+1)+py(x+2)=1.
\end{equation}
First observe that if $n=0$ or $p=0$, (\ref{bucl}) reduces back to the preceding section. However, if $m=0$, we have $ny(x)+py(x+2)=1$, and this needs to be considered separately, whenever $n+p\neq 0$, see below.

\subsection{The case $n+m+p=0$.} If $n+m+p=0$, then equation (\ref{bucl}) takes the form
\begin{equation}\label{bu1}
F(x+1)-\frac{n}{p}F(x)=\frac{1}{p}
\end{equation}
by setting $F(x)=y(x+1)-y(x)$.

If now $n=p$, then, by Proposition \ref{P1},
$$F(x)=\Pi (x)+\frac{1}{p}x.$$
Recalling then Proposition \ref{P11} and Proposition \ref{P5}, we conclude that all tropical meromorphic solutions to (\ref{bucl}) may be written in the form
\begin{equation}\label{buclS}
y(x)=\widetilde{\Pi} (x)+\Phi (x,\Pi)+\Phi (0,\Pi)x+\frac{1}{p}(\Psi (x)-x).
\end{equation}

If next $n=-p$, then we have $m=0$. Reversed, if $m=0$, then $n=-p$. In this case,
$$F(x+1)+F(x)=\frac{1}{p},$$
hence $F(x)=y(x+1)-y(x)=\Xi (x)+\frac{1}{2p}$ by Proposition \ref{P12}. Relying now on Proposition \ref{P3} and Proposition \ref{P5}, we obtain
$$y(x)=\Pi (x)-\frac{1}{2}\Xi (x)+\frac{1}{2p}x.$$

It remains to consider the case $n\neq\pm p$. By Theorem \ref{thlloo}, we obtain
$$F(x)=L_{b}(e_{n/p}(x-b))+\frac{1}{p-n}.$$
Therefore, by linearity and Proposition \ref{P2}, all tropical meromorphic solutions to (\ref{bucl}) now take the form
\begin{equation}\label{buclS2}
y(x)=\Pi (x)+\frac{p}{n-p}L_{b}(e_{n/p}(x-b))+\frac{1}{p-n}x.
\end{equation}

\subsection{The case $n+m+p\neq 0$.} Define $F(x):=y(x)-\frac{1}{n+m+p}$. Thus, $(\ref{bucl})$ takes the form
\begin{equation}\label{buhao}
nF(x)+mF(x+1)+pF(x+2)=0,
\end{equation}
and all solutions to (\ref{bucl}) follow as soon as all solutions to (\ref{buhao}) have been obtained. Writing now $c:=-\frac{m}{p}$ and $d:=\frac{n}{p}$, $(\ref{buhao})$ may be written as
\begin{equation}\label{haolei}
 F(x+1)-cF(x)+dF(x-1)=0.
\end{equation}
Ideas to solving (\ref{haolei}) can be found in \cite{Laine1} and in \cite{KLT}, where the case $d=1$ has been treated. However, by Proposition \ref{P11} and Proposition \ref{P111} above, the statements in Theorem 7.7 and Theorem 7.8 in \cite{KLT} are incomplete. It had been pointed out in \cite{Laine1} that a similar method could be used to solving (\ref{haolei}) for arbitrary real numbers $d\neq 0,1$. Due to the incomplete reasoning in \cite{KLT}, we prove the following theorem in detail.

\begin{theorem}\label{ThmB}
Tropical meromorphic solutions $F$ of equation (\ref{haolei}) exist as follows:

(1) If $c=2, d=1$, then all tropical meromorphic solutions $F$ to (\ref{haolei}) are linear combinations of $\Pi (x)$, of $L(x)\equiv x$ and all tropical meromorphic functions $\Phi (x,\Pi_{0}):=[x]\Pi_{0}(x)$, where $\Pi_{0}$ is an arbitrary tropical meromorphic function that is $1$-periodic and satisfies $\Pi_{0}(0)=0$.

(2) If $c=-2, d=1$, then all tropical meromorphic solutions $F$ to (\ref{haolei}) are linear combinations of tropical meromorphic functions that are either $2$-periodic and anti-$1$-periodic, or of type $F(x)=[x-x_{0}]\Xi (x)$, where $x_{0}\in\mathbf{R}$ is arbitrary $\Xi$ is an arbitrary $2$-periodic, anti-$1$-periodic meromorphic function such that $\Xi(x_{0})=0$.

(3) If $c^{2}-4d=0, d\neq 0,1$, and $c<0$, then all tropical meromorphic solutions to (\ref{haolei}) are either linear combinations of functions of type $F(x)=L_{b}(e_{c/2}(x-b))$, or linear combinations of functions of type (2) $F(x)=[x-b_{0}]L_{b}(e_{c/2}(x-b))$, where $e_{c/2}(b_{0}-b)=0$.

(4) If $c^{2}-4d=0, d\neq 0,1$, and $c>0$, then all tropical meromorphic functions of type $F(x)=L_{b}(e_{c/2}(x-b))$ are solutions to (\ref{haolei}).

(5) If  $c^{2}-4d>0, d\neq 0,1$, then all tropical meromorphic solutions $F$ to (\ref{haolei}) are linear combinations of functions that are either in $L_{b}(e_{\alpha}(x-b))$ or in $L_{b}(e_{\beta}(x-b))$, where  $\alpha ,\beta$ are the roots of $\lambda^{2}-c\lambda +d=0$.
\end{theorem}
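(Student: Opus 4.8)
The plan is to reduce equation (\ref{haolei}), $F(x+1)-cF(x)+dF(x-1)=0$, to the characteristic polynomial $\lambda^{2}-c\lambda+d=0$ and treat each case according to the nature of its roots, exactly as one does for constant-coefficient linear recursions, but keeping careful track of which formal solutions are genuinely tropical meromorphic (continuous, piecewise linear). In every case the first step is the same algebraic manipulation: if $\alpha$ is a root of $\lambda^{2}-c\lambda+d=0$, then setting $G(x):=F(x+1)-\alpha F(x)$ converts (\ref{haolei}) into the first-order equation $G(x+1)-\beta G(x)=0$, where $\beta$ is the other root (so $\alpha+\beta=c$, $\alpha\beta=d$). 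This is the workhorse; it lets us invoke Theorem \ref{ThmA}, Proposition \ref{P1}, Proposition \ref{P12}, and especially Proposition \ref{P2}, Remark \ref{P2rem}, Proposition \ref{P25} to solve the resulting inhomogeneous first-order equation for $F$.

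For the cases with distinct roots I would argue as follows. In case (5), $c^{2}-4d>0$ gives two distinct real roots $\alpha,\beta$; since $d\ne 0,1$ at least one of them, and in fact (by checking the product and sum) the relevant ones, have absolute value $\ne 1$, so $G=L_{b}(e_{\beta}(x-b))$ by Theorem \ref{ThmA}(3), and then $F(x+1)-\alpha F(x)=L_{b}(e_{\beta}(x-b))$ is solved termwise by Proposition \ref{P2} and Remark \ref{P2rem}, yielding $F\in L_{b}(e_{\alpha}(x-b))+L_{b}(e_{\beta}(x-b))$; symmetry in $\alpha,\beta$ and linearity close the argument, with the homogeneous part absorbed. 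For cases (1) and (2), the double root is $\alpha=1$ respectively $\alpha=-1$, so $d=1$; here $G$ satisfies $G(x+1)-G(x)=0$ (resp. $G(x+1)+G(x)=0$), i.e. $G=\Pi(x)$ (resp. $G=\Xi(x)$), and then $F(x+1)-F(x)=\Pi(x)$ is solved by Corollary \ref{P11cor} (whose solution involves $[x](\Pi(x)-\Pi(0))$ and the linear term — this is precisely the $\Phi(x,\Pi_{0})$ and $L(x)$ appearing in (1)), while $F(x+1)+F(x)=\Xi(x)$ is solved by Proposition \ref{P2plus}, giving the $[x-x_{0}]\Xi(x)$ terms of (2). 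The role of Remark \ref{P11correm} is to explain why the naive "$x\Pi_{0}(x)$" answer is wrong and the correct $\Phi(x,\Pi_{0})$ must be used.

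Cases (3) and (4) are the delicate ones, and I expect case (4) — equivalently Remark \ref{P25rem2} — to be the main obstacle, or rather the place where one must honestly state that only an inclusion, not an equivalence, can be proved. When $c^{2}-4d=0$ the double root is $\alpha=c/2$, and $|\alpha|\ne1$ since $d\ne0,1$. If $c<0$ then $\alpha<0$: one first gets $G=L_{b}(e_{\alpha}(x-b))$, then solves $F(x+1)-\alpha F(x)=L_{b}(e_{\alpha}(x-b))$ using Proposition \ref{P25} (valid precisely because $\alpha<0$), which produces the $[x-b_{0}]L_{b}(e_{c/2}(x-b))$ terms; together with the homogeneous $L_{b}(e_{c/2}(x-b))$ this gives the full solution set in (3). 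If instead $c>0$, then $\alpha>0$, Proposition \ref{P25} is unavailable, and as noted in Remark \ref{P25rem2} we cannot rule out exotic tropical meromorphic solutions; hence in (4) we only assert that every $F\in L_{b}(e_{c/2}(x-b))$ \emph{is} a solution — a direct substitution using $e_{\alpha}(x+1)=\alpha e_{\alpha}(x)$ — without claiming these are all of them. Throughout, the recurring technical point to verify is that the candidate solutions, built from $[x]$- or $[x-x_{0}]$-multiples of periodic functions or tropical exponentials, are genuinely continuous and piecewise linear with the correct slope jumps; this is the content of the various elementary-computation remarks in the preliminary propositions, and I would simply cite those (Propositions \ref{P11}, \ref{P25}, \ref{P2plus}, Corollary \ref{P11cor}) rather than redo the slope bookkeeping.
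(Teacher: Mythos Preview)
Your proposal is correct and follows essentially the same route as the paper: reduce (\ref{haolei}) via the substitution $G(x)=F(x+1)-\alpha F(x)$ to a first-order equation governed by the other characteristic root, then invoke Theorem \ref{ThmA} for $G$ and the appropriate preliminary proposition (Corollary \ref{P11cor}, Proposition \ref{P2plus}, Proposition \ref{P25}, Proposition \ref{P2}) to recover $F$; for case (4) both you and the paper stop at verifying that $L_{b}(e_{c/2}(x-b))$ solves the equation, and for case (5) the paper simply cites \cite{KLT}, Theorem 7.9, where your termwise use of Proposition \ref{P2} is the underlying argument. One small caution: your parenthetical in case (5) that ``the relevant ones have absolute value $\ne 1$'' is not automatic from $d\ne 0,1$ alone (e.g.\ $\alpha=1$, $\beta=2$ gives $c=3$, $d=2$), so if you write this out in full you should either note that the statement tacitly assumes $|\alpha|,|\beta|\ne 1$ (else $e_{\alpha}$ is undefined) or handle the degenerate root $\pm 1$ separately; the paper does not address this either.
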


\begin{remark}\label{ThmBrem} (1) In Theorem \ref{ThmB}(4), it remains open to us, whether the solutions described therein are indeed all tropical meromorphic solutions to (\ref{haolei}).

(2) Observe that the case $c^{2}-4d<0$ remains open, if $d\neq 1$, compare to \cite{KLT}, p. 165. Following the ideas in \cite{KLT}, see also \cite{Laine1}, for the case $d=1$, natural candidates for solutions to (\ref{haolei}) in the case $d\neq 1$ might be
$$F_{1}(x)=r^{[x]}\cos(\theta[x])(x-[x])+r^{[x]}\frac{\cos(\theta[x])(\cos\theta-1)+\sin(\theta[x])\sin\theta}{2(1-\cos\theta)},$$
and
$$F_{2}(x)=r^{[x]}\sin(\theta[x])(x-[x])+r^{[x]}\frac{\sin(\theta[x])(\cos\theta-1)+\cos(\theta[x])\sin\theta}{2(1-\cos\theta)},$$
where $r^{2}=d\neq 1$, $c=2r\cos\theta$, $\theta\in (0,\pi )$. However, it is straightforward to see that $r^{[x]}$ is discontinuous at integers, whenever $d\neq 1$, hence $F_{1}(x)$, $F_{2}(x)$ are not tropical meromorphic.
\end{remark}

\begin{proof} To prove Case (1), it is elementary to verify that $\Pi (x)$ and $L(x)$ are, separately, solutions to (\ref{haolei}) as well as $\Phi (x,\Pi_{0})$ for an arbitrary $\Pi_{0}$. On the other hand, is $f(x)$ is an arbitrary non-vanishing tropical meromorphic solution to (\ref{haolei}), that is not $1$-periodic, we anyway have $f(x+1)-f(x)=f(x)-f(x-1)$. Hence, $\Pi_{0}(x):=f(x)-f(x-1)$ is non-vanishing and $1$-periodic, meaning that $f(x)=\Phi (x,\Pi_{0})+\Pi_{0}(0)x$ by Proposition \ref{P11}.

As to Case (2), it is immediate to verify that all $2$-periodic, anti-$1$-periodic tropical meromorphic functions are solutions to (\ref{haolei}) as well as tropical meromorphic solutions of type $F(x)=[x-x_{0}]\Xi (x)$ such that $\Xi (x_{0})=0$. Moreover, if $F$ is an arbitrary solution, then we may write (\ref{haolei}) in the form
$$F(x+1)+F(x)=-(F(x)+F(x-1)).$$
If now $T(x):=F(x)+F(x-1)$ vanishes, then $F$ has to be $2$-periodic and anti-$1$-periodic. Otherwise, $T$ is $2$-periodic, anti-$1$-periodic, and $F$ has to be of the asserted form by Proposition \ref{P2plus}.

We next proceed to prove Case (3). Since $c^{2}-4d=0$ and $d\neq1$, then $\frac{c}{2}(\neq1)$ is the root of $\lambda^{2}-c\lambda+d=0$.  Equation (\ref{haolei}) can be stated as
\begin{equation}\label{haolei1}
F(x+1)-\frac{c}{2}F(x)=\frac{c}{2}(F(x)-\frac{c}{2}F(x-1)).
\end{equation}
Denoting $T(x):=F(x)-\frac{c}{2}F(x-1)$, we see that $T(x)$ satisfies $T(x+1)=\frac{c}{2}T(x)$. If $T$ vanishes, then
$$F(x)-\frac{c}{2}F(x-1)=0,$$
meaning that $F(x)$ must be of type $F(x)=L_{b}(e_{c/2}(x-b))$. Otherwise, $T(x)$ has to be of type $L_{b}(e_{c/2}(x-b))$ as well and we may write
\begin{equation}\label{haolei2}
F(x)-\frac{c}{2}F(x-1)=\sum_{j=1}^{q}\alpha_{j}e_{\frac{c}{2}}(x-b_{j}).
\end{equation}
Suppose first $c<-2$ and recall that
$$e_{\frac{c}{2}}(x-b_{j})=(\frac{c}{2})^{[x-b_{j}]}(x-b_{j}-[x-b_{j}]+\frac{2}{c-2}).$$
To determine a point $b_{0,j}$ where $e_{\frac{c}{2}}(b_{0,j}-b_{j})=0$, we may take $b_{0,j}$ so that $0\leq b_{0,j}-x_{j}<1$. Then $[b_{0,j}-b_{j}]=0$, and we get $b_{0,j}=b_{j}-\frac{2}{c-2}$. It is now a straightforward computation to verify that $F(x):=[x-b_{0,j}]e_{\frac{c}{2}}(x-b_{j})$ is tropical meromorphic and satisfies equation (\ref{haolei}). A similar reasoning may be used to treat the case $-2<c<0$.

Case (4) is nothing else than a straightforward computation.

As to Case (5), it is again straightforward to verify the all functions mentioned in the claim are solutions to (\ref{haolei}). To see that an arbitrary solution to (\ref{haolei}) is a linear combination of functions in the claim, it is sufficient to refer to the proof of \cite{KLT}, Theorem 7.9, the reasoning therein carries over verbatim.
\end{proof}

\section{Tropical difference Fermat type functional equations with four terms}\label{F4}

We proceed to considering tropical difference Fermat type functional equations with four terms, such as
\begin{equation}\label{eq4}
y(x)^{\otimes n}\otimes y(x+1)^{\otimes m}\otimes y(x+2)^{\otimes p}\otimes y(x+3)^{\otimes q}=1,
\end{equation}
where $n,m,p,q$ are non-zero real numbers. Using classical notations, equation (\ref{eq4}) equals to
\begin{equation}\label{eq5}
ny(x)+my(x+1)+py(x+2)+qy(x+3)=1.
\end{equation}

The considerations can be divided in two parts, assuming that either $n+m+p+q=0$, or $n+m+p+q\neq 0$.
\subsection{The case $n+m+p+q=0$.} This subsection splits in two parts, assuming that either $3n+2m+p=0$ or that $3n+2m+p\neq 0$. Before proceeding, observe that our assumption $n,m,p,q\neq 0$ actually could be deleted. Indeed, if $n=0$, or if $q=0$, then (\ref{eq5}) returns back to the considerations in the preceding section. If then $m=0$, resp. $p=0$, then denoting $g(x):=y(x+2)-y(x+3)$, (\ref{eq5}) reduces to $(n+p)g(x)+ng(x-1)+ng(x-2)=1$, resp. to $(n+m)g(x)+(n+m)g(x-1)+ng(x-2)=1$, and we may again proceed by the arguments in the preceding sections.

\begin{theorem}\label{th1} If $n+m+p+q=0$ and $3n+2m+p=0$, then the following conclusions hold for tropical meromorphic solutions $y(x)$ of equation (\ref{eq5}):

If $2n+m=n$, then
$$y(x)=\widetilde{\Pi} (x)+\Phi (x,\Pi )-\frac{1}{4}\Xi (x)+\frac{1}{2n}(\Psi (x)-x);$$

If $2n+m=-n$, then
$$y(x)=\widehat{\Pi}(x)+\Phi (x,\widetilde{\Pi})-\Theta (x,\Pi )-\frac{1}{n}\Upsilon (x)+\frac{3}{n}\Psi (x)+(\Pi (0)-\frac{3}{n})x.$$

If $2n+m\neq\pm n, 0$, then
$$y(x)=\widetilde{\Pi}(x)+\Phi (x,\Pi)+(\Pi(0)-\frac{2}{3n+m})x+\frac{1}{3n+m}\Psi (x)+L_{b}(e_{-n/(2n+m)}(x-b)).$$
\end{theorem}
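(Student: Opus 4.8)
The strategy is the standard one used repeatedly in Section \ref{F3} and in Theorem \ref{ThmB}: since $n+m+p+q=0$, the substitution $F(x):=y(x+1)-y(x)$ collapses equation (\ref{eq5}) by one order. Writing $\sum n_{j}y(x+j)=1$ with $(n_0,n_1,n_2,n_3)=(n,m,p,q)$ and telescoping, one obtains a third-order relation among the $y$-values that becomes a \emph{second}-order relation in $F$; explicitly, grouping the coefficients, (\ref{eq5}) becomes
\begin{equation*}
qF(x+2)+(p+q)F(x+1)+(m+p+q)F(x)= 1 .
\end{equation*}
Now $m+p+q=-n$, $p+q=-(n+m)$, and $q=-(3n+2m+p)+\dots$; but we are in the case $3n+2m+p=0$, i.e. $q=-n-m-p = m+2n$ after using $p=-3n-2m$, so $q=2n+m$. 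Thus the equation for $F$ has leading coefficient $q=2n+m$ and the middle/constant structure $qF(x+2)+(p+q)F(x+1)-nF(x)=1$, where one checks $p+q = -(n+m)$ and the characteristic polynomial of the homogeneous part is (up to scaling by $q$) $\lambda^2+\frac{p+q}{q}\lambda-\frac{n}{q}$, whose roots are $1$ and $-n/q=-n/(2n+m)$. This is exactly why the three cases split according to whether $-n/(2n+m)$ equals $1$ (i.e. $2n+m=-n$), equals $-1$ in a degenerate sense handled via the double root $1$ (i.e. $2n+m=n$, forcing the characteristic polynomial to have $1$ as a double root), or is a genuine third value $\neq \pm 1,0$.

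\textbf{Step-by-step.} First, I would carry out the telescoping carefully and record the reduced second-order equation for $F$ together with the identity $3n+2m+p=0$, $n+m+p+q=0$, so that all coefficients are expressed through $n$ and $m$ only; in particular $p=-3n-2m$, $q=2n+m$. Second, I split into the three announced subcases.
\begin{itemize}
\item[(a)] If $2n+m=n$, i.e. $m=-n$, then $q=n$, $p=-n$, and the reduced equation has $1$ as a double root of its characteristic polynomial; one is in the situation of Theorem \ref{ThmB}(1)--(2). Solving the $F$-equation (a particular solution plus the homogeneous solutions $\Pi(x)$, $L(x)\equiv x$, $\Phi(x,\Pi_0)$) and then integrating once more via Proposition \ref{P11}, Corollary \ref{P11cor}, Proposition \ref{P5}, Proposition \ref{P12} and Proposition \ref{P3} to recover $y$ from $y(x+1)-y(x)=F(x)$ yields the stated form $y(x)=\widetilde{\Pi}(x)+\Phi(x,\Pi)-\tfrac14\Xi(x)+\tfrac{1}{2n}(\Psi(x)-x)$, the $\Xi$-term coming from the second root $-1$ that appears after dividing out (note $q=n$ so the constant in the particular solution is $\tfrac{1}{2n}$, consistent with $\tfrac{1}{3n+m}$ at $m=-n$).
\item[(b)] If $2n+m=-n$, i.e. $m=-3n$, then $-n/q=1$ again but now entangled differently; here $p=3n$, $q=-n$, and one root of the characteristic polynomial of the $F$-equation is $1$ while the telescoping must be iterated once more, so the ``double primitive'' functions $\Theta(x,\Pi)$, $\Upsilon(x)$ and $\Psi(x)$ from Propositions \ref{P111}, \ref{P6}, \ref{P4}, \ref{P5} enter. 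Successive application of Propositions \ref{P11}, \ref{P111}, \ref{P5}, \ref{P6} produces $y(x)=\widehat{\Pi}(x)+\Phi(x,\widetilde{\Pi})-\Theta(x,\Pi)-\tfrac1n\Upsilon(x)+\tfrac3n\Psi(x)+(\Pi(0)-\tfrac3n)x$.
\item[(c)] If $2n+m\neq\pm n,0$, then $-n/(2n+m)$ is a genuine third eigenvalue, distinct from $1$; the homogeneous $F$-equation has solutions $L_b(e_{-n/(2n+m)}(x-b))$ and constants' primitives, a particular solution is the constant $\tfrac{1}{3n+m}$ divided appropriately, and Proposition \ref{P2} (with $a=1$) plus Proposition \ref{P11} and Proposition \ref{P5} recover $y(x)=\widetilde{\Pi}(x)+\Phi(x,\Pi)+(\Pi(0)-\tfrac{2}{3n+m})x+\tfrac{1}{3n+m}\Psi(x)+L_b(e_{-n/(2n+m)}(x-b))$.
\end{itemize}
In each case one must also verify completeness: every tropical meromorphic $y$ solving (\ref{eq5}) gives an $F$ solving the reduced equation, and every such $F$ is of the listed form by the cited propositions, after which Proposition \ref{P1} guarantees that primitives of $F$ are determined up to a $1$-periodic summand.

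\textbf{Main obstacle.} The conceptual content is light — it is all telescoping plus invoking the catalogue of Section \ref{lemmas} — so the real difficulty is entirely bookkeeping: keeping the constants straight through two or three successive ``antidifference'' steps, matching the particular-solution constants ($\tfrac{1}{2n}$, $\tfrac3n$, $\tfrac{1}{3n+m}$, $\tfrac{2}{3n+m}$) to the coefficients $n,m$ after eliminating $p,q$, and absorbing the many ambient $1$-periodic functions into single symbols $\Pi,\widetilde\Pi,\widehat\Pi$ consistently. The step I expect to be most error-prone is case (b), where the characteristic root $1$ forces iterating the primitive operation three times, so that $\Psi$, $\Upsilon$, $\Theta$ all appear and their mutual identities (e.g. $\Psi(x+1)-\Psi(x)=x+1$, the definitions of $\Theta$ and $\Upsilon$ in Propositions \ref{P111} and \ref{P6}) must be combined without sign slips.
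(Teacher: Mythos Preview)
Your overall strategy matches the paper's: reduce by one order using a first difference, then solve and integrate back with the catalogue of Section~\ref{lemmas}. The paper organizes this slightly differently --- it sets $g(x)=y(x+2)-y(x+3)=-F(x+2)$ and then differences \emph{again} to $H(x):=g(x-1)-g(x)$, reaching the first-order equation $(2n+m)H(x)+nH(x-1)=1$, which it solves in each subcase and integrates twice back to $y$. That second differencing is exactly your ``factor out the root $\lambda=1$'' from the characteristic polynomial of the $F$-equation, so the two routes are equivalent.

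There is, however, a concrete error in your case analysis. Under $n+m+p+q=0$ and $3n+2m+p=0$ one has $q=2n+m$, $p+q=-(n+m)$, $m+p+q=-n$, so the $F$-equation is $(2n+m)F(x+2)-(n+m)F(x+1)-nF(x)=1$, with characteristic roots $1$ and $-n/(2n+m)$. In case~(a), $2n+m=n$, the second root is $-1$: the $F$-equation has \emph{simple} roots $1$ and $-1$, \emph{not} a double root $1$. Your listed homogeneous solutions $\Pi,\,L,\,\Phi(x,\Pi_{0})$ and the citation of Theorem~\ref{ThmB}(1) are therefore for the wrong situation; the correct homogeneous pieces here are $\Pi(x)$ and $\Xi(x)$. (This is exactly why the paper's $H$ in this subcase satisfies $H(x)+H(x-1)=\tfrac{1}{n}$ and produces $\Xi$ directly via Proposition~\ref{P12}.) It is case~(b), $2n+m=-n$, where the second root equals $+1$ and $1$ is genuinely double --- and there your description (iterated primitives, with $\Theta,\Upsilon,\Psi$ entering) is correct. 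You seem to half-sense this, since you tack on ``the $\Xi$-term coming from the second root $-1$'' at the end of~(a); but the body of your argument for~(a), if followed literally, would not yield the $-\tfrac14\Xi(x)$ term. Swap the root identifications in (a) and (b) and the outline becomes sound.
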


\begin{proof}  To prepare the separate proofs for each of the subcases, denote $g(x):=y(x+2)-y(x+3)$. Equation (\ref{eq5}) then takes the form
\begin{equation}\label{eq51}
(n+m+p)g(x)+(n+m)g(x-1)+ng(x-2)=1
\end{equation}
and, making use of the assumption that $3n+2m+p=0$, further
\begin{equation}\label{eq52}
(n+m)(g(x-1)-g(x))+n(g(x-2)-g(x))=1.
\end{equation}

Denoting now $H(x):=g(x-1)-g(x)$, we obtain
\begin{equation}\label{eq53}
(2n+m)H(x)+nH(x-1)=1.
\end{equation}

We now proceed to considering the subcases separately (observing first that if $2n+m=0$, then $q=0$, contradicting our assumptions):

(1) In the case $2n+m=n$, we have $n=-m$ and equation (\ref{eq53}) takes the form
$$H(x)+H(x-1)=\frac{1}{n},$$
resulting in $H(x)=\Xi (x)+\frac{1}{2n}$ by Proposition \ref{P12}. Hence
$$g(x+1)-g(x)=-\frac{1}{2n}+\Xi (x).$$
Solving termwise by linearity, and making use of Propositions \ref{P1} and \ref{P11}, we obtain
$$g(x)=y(x+2)-y(x+3)=\Pi (x)-\frac{1}{2}\Xi (x)-\frac{1}{2n}x.$$
Therefore,
$$y(x+1)-y(x)=\Pi (x)+\frac{1}{2}\Xi (x)+\frac{1}{2n}x.$$
Applying now Propositions \ref{P1}, \ref{P11}, \ref{P3} and \ref{P5}, we get
$$y(x)=\widetilde{\Pi} (x)+\Phi (x,\Pi )-\frac{1}{4}\Xi (x)+\frac{1}{2n}(\Psi (x)-x).$$
Observe that (\ref{eq5}) reduces in this case to $$y(x)-y(x+1)-y(x+2)+y(x+3)=\frac{1}{n}.$$

\medskip

(2) In the case $2n+m=-n$, we have $m=-3n$ and equation (\ref{eq53}) takes the form
$$H(x)-H(x-1)=-\frac{1}{n}$$
being solved by $H(x)=g(x-1)-g(x)=\Pi (x)-\frac{1}{n}x$. Therefore,
$$g(x+1)-g(x)=\Pi (x)+\frac{1}{n}x,$$
hence
$$g(x)=y(x+2)-y(x+3)=\widetilde{\Pi}(x)+\Phi (x,\Pi )+\frac{1}{n}(\Psi (x)-x)$$
and
$$y(x+1)-y(x)=\widetilde{\Pi}(x)-\Phi (x-2,\Pi )-\frac{1}{n}\Psi (x-2)+\frac{1}{n}x.$$
Since $\Phi (x-2,\Pi )-\Phi (x,\Pi )=-2(\Pi (x-2)-\Pi (0))$ is $1$-periodic, and $\Psi (x-2)=\Psi (x)-2x+1$, we may further write
$$y(x+1)-y(x)=\widetilde{\Pi}(x)-\Phi (x,\Pi )-\frac{1}{n}\Psi (x)+\frac{3}{n}x.$$
Making now use of Corollary \ref{P11cor}, Corollary \ref{P111cor}, Proposition \ref{P5}, Proposition \ref{P6} and linearity,
the required solution may be written in the form
$$y(x)=\widehat{\Pi}(x)+\Phi (x,\widetilde{\Pi})-\Theta (x,\Pi )-\frac{1}{n}\Upsilon (x)+\frac{3}{n}\Psi (x)+(\Pi (0)-\frac{3}{n})x.$$

\medskip

(3) It remains to consider the case $2n+m\neq\pm n$. Equation (\ref{eq53}) now takes the form
$$H(x)+\frac{n}{2n+m}H(x-1)=\frac{1}{2n+m},$$
and we immediately obtain
$$H(x)=L_{b}(e_{-n/(2n+m)}(x-b))+\frac{1}{3n+m},$$
We next proceed, by using Proposition \ref{P2}, to solving
$$g(x)-g(x-1)=-H(x)=L_{b}(e_{-n/(2n+m)}(x-b))-\frac{1}{3n+m}$$
to obtain
$$g(x-1)=\Pi (x)+L_{b}(e_{-n/(2n+m)}(x-b))-\frac{1}{3n+m}x.$$
Solving next $y(x+1)$ from
$$y(x+2)-y(x+1)=-g(x-1)=\Pi (x)+L_{b}(e_{-n/(2n+m)}(x-b))+\frac{1}{3n+m}x,$$
we conclude that
$$y(x+1)=\widetilde{\Pi} (x)+\Phi (x,\Pi)+\Pi (0)x+\frac{1}{3n+m}(\Psi (x)-x)+L_{b}(e_{-n/(2n+m)}(x-b)).$$
Shifting now $x+1$ to $x$, recalling the difference equations satisfied by $\Psi (x)$ and $e_{-n/m+2n}(x-c_{j})$, and observing that $\Phi (x-1,\Pi )$ can be replaced as $\Phi (x,\Pi )$ as in the preceding case, we get
$$y(x)=\widetilde{\Pi}(x)+\Phi (x,\Pi)+(\Pi (0)-\frac{2}{3n+m})x+\frac{1}{3n+m}\Psi (x)+L_{b}(e_{-n/(2n+m)}(x-b)).$$
\end{proof}

\begin{theorem}\label{re2} If $n+m+p+q=0$ and $3n+2m+p\neq 0$, then the following conclusions hold for tropical meromorphic solutions $y(x)$ of equation (\ref{eq5}):

(1)  If $n+m=0$ and $n=p$, then $y(x)=\Xi (x)+\Pi (x)-\frac{1}{2n}x$.

(2) If $n+m=0$ and $n\neq\pm p$, then $y(x)=L_{b}(e_{-n/p}(\frac{x}{2}-b))+\Pi (x)-\frac{1}{n+p}x$.

(3) If $n+m\neq0$, $\frac{n+m}{n+m+p}=2$ and $\frac{n}{n+m+p}=1$, then

$$y(x)=\Pi (x)+\widetilde{\Xi}(x)-\frac{1}{2}[x-x_{0}]\Xi (x)-\frac{1}{4}x,$$
where $\Xi (x_{0})=0$.

(4) If $n+m\neq 0$ and $\left(\frac{n+m}{n+m+p}\right)^{2}-\frac{4n}{n+m+p}=0$ and $\frac{n}{n+m+p}\neq1$, then $$y(x)=\Pi (x)-\frac{1}{4}x+L_{b}(e_{\frac{n+m}{2(n+m+p)}}(x-b)$$

and if $n+m\neq 0$ and $\left(\frac{n+m}{n+m+p}\right)^{2}-\frac{4n}{n+m+p}>0$, then
$$y(x)=\Pi (x)-\frac{1}{4}x+L_{b}(e_{\alpha}(x-b))+L_{b}(e_{\beta}(x-b)),$$
where $\alpha ,\beta$ are the roots of $\lambda^{2}+\frac{n+m}{n+m+p}\lambda+\frac{n}{n+m+p}=0$.
\end{theorem}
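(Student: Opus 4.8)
The strategy mirrors the reduction already used for Theorem \ref{th1}. Since $n+m+p+q=0$, introduce the auxiliary function $g(x):=y(x+2)-y(x+3)$, so that $y(x+1)-y(x+2)=g(x-1)$ and $y(x)-y(x+1)=g(x-2)$. Telescoping $y(x)-y(x+3)$, $y(x+1)-y(x+3)$, $y(x+2)-y(x+3)$ in terms of $g$ and using $q=-(n+m+p)$ turns (\ref{eq5}) into the three-term equation $ng(x-2)+(n+m)g(x-1)+(n+m+p)g(x)=1$, whose coefficients add up to exactly $3n+2m+p\neq 0$. Thus $g$ is governed by a non-homogeneous three-term difference equation with non-vanishing coefficient sum, i.e.\ precisely the setting of the second subsection of Section \ref{F3}. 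Conversely, every tropical meromorphic solution $g$ of this reduced equation produces all solutions $y$ by solving $y(x+1)-y(x)=-g(x-2)$, done termwise by linearity with the first-order propositions of Section \ref{lemmas}. As in the remark preceding the theorem, the degenerate coefficient cases $n=0$, $q=0$, $m=0$, $p=0$ reduce to earlier sections, so I assume $n,p,q\neq 0$, hence also $n+m+p\neq 0$.

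First I dispose of the case $n+m=0$ (cases (1) and (2)). Then $m=-n$, $n+m+p=p$, and the reduced equation degenerates to the double-shift equation $ng(x)+pg(x+2)=1$. Setting $h(u):=g(2u)$ converts it into the single-shift two-term equation $ph(u+1)+nh(u)=1$, to which Theorem \ref{thlloo} applies verbatim: when $n=p$ one gets $h(u)=\Xi(u)+\tfrac{1}{2n}$ by Proposition \ref{P12}, and when $n\neq\pm p$ one gets $h(u)=L_b(e_{-n/p}(u-b))+\tfrac{1}{n+p}$ (the hypothesis $n\neq\pm p$ is exactly what guarantees $-n/p\neq\pm 1$ and $n+p\neq 0$). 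Undoing the substitution, $g(x)=h(x/2)$ is again continuous and piecewise linear, hence tropical meromorphic; the half-argument is what produces the $e_{-n/p}(\tfrac{x}{2}-b)$ that appears in the statement.

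For the case $n+m\neq 0$ (cases (3) and (4)), the reduced equation is genuinely three-term. Subtract its constant particular solution $g_{0}:=\tfrac{1}{3n+2m+p}$ and put $\widetilde F:=g-g_{0}$; after a unit shift this satisfies the homogeneous equation $\widetilde F(x+1)-c\widetilde F(x)+d\widetilde F(x-1)=0$ with $c:=-\tfrac{n+m}{n+m+p}$ and $d:=\tfrac{n}{n+m+p}$. Theorem \ref{ThmB} then classifies $\widetilde F$, and the sub-hypotheses listed single out its cases: $\tfrac{n+m}{n+m+p}=2$ together with $\tfrac{n}{n+m+p}=1$ means $(c,d)=(-2,1)$, i.e.\ Theorem \ref{ThmB}(2); the first alternative in case (4) is $c^{2}-4d=0$ with $d\neq 1$ (Theorem \ref{ThmB}(3) or (4)); and the second is $c^{2}-4d>0$ (Theorem \ref{ThmB}(5), with $\alpha,\beta$ the roots of $\lambda^{2}-c\lambda+d=0$, i.e.\ of $\lambda^{2}+\tfrac{n+m}{n+m+p}\lambda+\tfrac{n}{n+m+p}=0$). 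This pins down $g$ in each listed case.

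It remains to integrate back. From $y(x+1)-y(x)=-g(x-2)$ one solves term by term: the constant part of $g$ contributes a $\Pi(x)$ plus a linear term by Proposition \ref{P1}; a $2$-periodic, anti-$1$-periodic summand of $g$ is handled by Proposition \ref{P3}, an $[x-x_{0}]$-weighted such summand by Proposition \ref{P31} (and Proposition \ref{P2plus}); tropical-exponential summands by Proposition \ref{P2} together with Remark \ref{P2rem}, the degenerate exponential case by Proposition \ref{P25}; and in the $n+m=0$ branch one must first rewrite $g(x-2)=h(\tfrac{x}{2}-1)$ using the anti-$1$-periodicity of $h$ before integrating. I expect this assembly to be the real obstacle --- not any individual proposition, but the bookkeeping: keeping each free (anti)periodic function counted exactly once, carrying the special-solution corrections correctly through the successive summations, and tracking every multiplicative constant and shift (this is the very point at which the parallel arguments in \cite{KLT} turned out to be incomplete; cf.\ Proposition \ref{P11} and Remark \ref{P11correm}), so each displayed solution is most safely confirmed by a direct substitution at the end.
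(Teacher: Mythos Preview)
Your reduction to $g(x)=y(x+2)-y(x+3)$ and the treatment of cases (3) and (4) via Theorem \ref{ThmB} followed by termwise integration is exactly what the paper does: it writes the shifted form (\ref{eq5123}), subtracts the constant particular solution, applies Theorem \ref{ThmB}(2), (3)/(4), (5) respectively, and integrates back with Propositions \ref{P1}, \ref{P3}, \ref{P31}, \ref{P2}. So for those cases your plan matches the paper.

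For cases (1) and (2) the paper takes a different and shorter path: it abandons $g$ and works directly with a shift-$2$ combination of $y$, namely $G(x):=y(x)+y(x+2)$ in case (1) and $F(x):=ny(x)+py(x+2)$ in case (2). Each of these satisfies a first-order shift-$1$ equation, and then $y$ is recovered from a single shift-$2$ equation $y(x+2)+(\text{const})\,y(x)=\cdots$, handled by the obvious period-$2$ modifications of Theorem \ref{ThmA} and Proposition \ref{P12}.

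Your alternative for (1) and (2)---solving $ng(x)+pg(x+2)=1$ via $h(u)=g(2u)$ and then integrating $y(x+1)-y(x)=-g(x-2)$---is correct up to the point where you have $g$, but the final integration step is not covered by the tools you list. After the rescaling, $g(x)=\Xi(x/2)+\text{const}$ in case (1) and $g(x)=L_{b}(e_{-n/p}(x/2-b))+\text{const}$ in case (2); as a function of $x$ the non-constant part is $4$-periodic anti-$2$-periodic, respectively a half-argument tropical exponential. None of Propositions \ref{P1}--\ref{P6} solves $f(x+1)-f(x)=\phi(x)$ for such $\phi$: Proposition \ref{P3} needs an anti-$1$-periodic right side, and Proposition \ref{P2} needs $e_{\alpha}(x-b)$ with integer shift, not $e_{\alpha}(x/2-b)$. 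So the ``assembly'' you flag as the obstacle is in fact a genuine gap in your route for (1) and (2); the paper's direct shift-$2$ substitution for $y$ is what sidesteps it.
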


\begin{remark}\label{re2rem} Observe that by Remark \ref{ThmBrem}, the case when $n+m+p+q=0$, $3n+2m+p\neq 0$, $n+m\neq 0$ and $\left(\frac{n+m}{n+m+p}\right)^{2}-\frac{4n}{n+m+p}<0$ remains open.
\end{remark}

\begin{proof} Suppose first that we have, in addition, $n+m=0$. Then $n+m+p=p\neq 0$. If now $n=-p$, we have $3n+2m+p=2(n+m)=0$, contradicting our assumption. If then $n=p$, then one may immediately see that $m=-n$ and $q=-n$, and (\ref{eq5}) takes the form

\begin{equation}\label{eq51deg}
y(x)-y(x+1)+y(x+2)-y(x+3)=\frac{1}{n}.
\end{equation}

Setting $G(x):=y(x)+y(x+2)$,  $(\ref{eq51deg})$ turns into $G(x+1)-G(x)=-\frac{1}{n}.$
Thus, $G(x)=\Pi (x)-\frac{1}{n}x$, hence $y(x+2)+y(x)=\Pi (x)-\frac{1}{n}x.$ It is now easy to solve this equation termwise to obtain $y(x)=\Xi (x)+\frac{\Pi (x)}{2}-\frac{1}{2n}x+\frac{1}{2n}$. The claim now follows by changing the notation. It remains to assume that $n\neq\pm p$.  Equation now $(\ref{eq5})$ reduces to

\begin{equation}\label{eq51deg33}
ny(x)-ny(x+1)+py(x+2)-py(x+3)=1.
\end{equation}

By setting $F(x):=ny(x)+py(x+2)$, we get $F(x+1)-F(x)=-1$. Thus $$F(x)=py(x+2)+ny(x)=\Pi (x)-x.$$ By an easy modification of Theorem \ref{ThmA} and of Proposition \ref{P12}, we have $y(x)=L_{b}(e_{-n/p}(\frac{x}{2}-b))+\frac{\Pi (x)}{n+p}-\frac{x}{n+p}+\frac{2p}{(n+p)^{2}},$ where $x_{1},\ldots ,x_{K}$ are the slope discontinuities of $f(x)$ in the interval $[0,1)$. The claim again follows by a slight change of notation.

For the rest of this proof, we have $n+m+p\neq0$ and  $n+m\neq0$. Thus $(\ref{eq51})$ may be written as

\begin{equation}\label{eq5123}
g(x+1)+\frac{n+m}{n+m+p}g(x)+\frac{n}{n+m+p}g(x-1)=1.
\end{equation}

To apply now Theorem \ref{ThmB}, we have to look all cases therein separately.

Case $\frac{n+m}{n+m+p}=-2$ and $\frac{n}{n+m+p}=1$. In this case, $3n+m=0$ and $m+p=0$. This implies that $3n+2m+p=0$, a contradiction.

Case $\frac{n+m}{n+m+p}=2$ and $\frac{n}{n+m+p}=1$. In this case, we have $m=-p$, $m=n$ and $q=-n$. Equation now $(\ref{eq5})$ reduces to

\begin{equation}\label{eq51deg34}
ny(x)-py(x+1)+py(x+2)-ny(x+3)=1.
\end{equation}

Denoting, as before, $g(x):=y(x+2)-y(x+3)$, and further, $H(x):=g(x)-\frac{1}{4}$, we obtain
$$H(x+1)+2H(x)+H(x-1)=0.$$
Recalling now Theorem \ref{ThmB}, we have
$$g(x)=y(x+2)-y(x+3)=\widetilde{\Xi}(x)+[x-x_{0}]\Xi (x)+\frac{1}{4},$$
where $\Xi (x_{0})=0$. Therefore, $f(x)=-y(x+2)$ satisfies
$$f(x+1)-f(x)=\widetilde{\Xi}(x)+[x-x_{0}]\Xi (x)+\frac{1}{4}.$$
By Proposition \ref{P3} and Proposition \ref{P31}, we conclude that
$$f(x)=\Pi (x)-\frac{1}{2}\widetilde{\Xi}(x)-\frac{1}{2}[x-x_{0}]\Xi (x)+\frac{1}{4}\Xi (x)+\frac{1}{4}x.$$
Therefore, by combining together $1$-periodic functions, resp. $2$-periodic, anti-$1$-periodic functions into just one such respective function, by slight change in notation and simplifying, we finally obtain
$$y(x)=\Pi (x)+\widetilde{\Xi}(x)-\frac{1}{2}[x-x_{0}]\Xi (x)-\frac{1}{4}x,$$
where $\Xi (x_{0})=0$.

Case $\left(\frac{n+m}{n+m+p}\right)^{2}-\frac{4n}{n+m+p}=0$ and $\frac{n}{n+m+p}\neq1$, we may use the preceding notations to obtain

\begin{equation}\label{eq61}
H(x+1)+\frac{n+m}{n+m+p}H(x)+\frac{n}{n+m+p}H(x-1)=0.
\end{equation}

Therefore, all tropical meromorphic functions of type
$$H(x)=y(x+2)-y(x+3)=L_{b}(e_{\frac{n+m}{2(n+m+p)}}(x-b))$$
are solutions to (\ref{eq61}). Denoting again $f(x):=-y(x+2)$, we get
$$f(x+1)-f(x)=L_{b}(e_{\frac{n+m}{2(n+m+p)}}(x-b))+\frac{1}{4},$$
hence
$$f(x)=\Pi (x)+\frac{1}{4}x+L_{b}(e_{\frac{n+m}{2(n+m+p)}}(x-b))$$
and so
$$y(x)=\Pi (x)-\frac{1}{4}x+L_{b}(e_{\frac{n+m}{2(n+m+p)}}(x-b)).$$

Case $\left(\frac{n+m}{n+m+p}\right)^{2}-\frac{4n}{n+m+p}>0$, we have
$$H(x)=y(x+2)-y(x+3)=L_{b}(e_{\alpha}(x-b))+L_{b}(e_{\beta}(x-b))+\frac{1}{4},$$
where $\alpha ,\beta$ are the roots of $\lambda^{2}+\frac{n+m}{n+m+p}\lambda+\frac{n}{n+m+p}=0$. This implies that
$$f(x)=-y(x+2)=\Pi (x)+\frac{1}{4}x+L_{b}(e_{\alpha}(x-b))+L_{b}(e_{\beta}(x-b)).$$
Therefore,
$$y(x)=\Pi (x)-\frac{1}{4}x+L_{b}(e_{\alpha}(x-b))+L_{b}(e_{\beta}(x-b)).$$
\end{proof}

\begin{remark} Observe that in Case $\left(\frac{n+m}{n+m+p}\right)^{2}-\frac{4n}{n+m+p}=0$ and $\frac{n}{n+m+p}\neq1$, all solutions to (\ref{eq61}) are
$$H(x)=L_{b}(e_{\frac{n+m}{2(n+m+p)}}(x-b))+[x-b_{0}]L_{b}(e_{\frac{n+m}{2(n+m+p)}}(x-b))+\frac{1}{4},$$
where $e_{\frac{n+m}{2(n+m+p)}}(b_{0}-b)=0$. To find all solutions $y(x)$ to $y(x+2)-y(x+3)=H(x)$, we should be able to find tropical meromorphic solutions to $f(x+1)-f(x)=[x-b_{0}]e_{\frac{n+m}{2(n+m+p)}}(x-b)$. This remains open to us.
\end{remark}

\subsection{The case $n+m+p+q\neq 0$.} In this case, equation (\ref{eq5}) may be written as
\begin{equation}\label{eq6}
F(x+2)+\frac{p}{q}F(x+1)+\frac{m}{q}F(x)+\frac{n}{q}F(x-1)=0
\end{equation}
by setting $F(x)=y(x+1)-y(x)-\frac{c}{n+m+p+q}$. We will discuss all possible forms of $F(x)$ in the following, then the expressions of $y(x)$ follows easily.

Let $\xi_{1}, \xi_{2},\xi_{3}$ be the roots of $\lambda^{3}+\frac{p}{q}\lambda^{2}+\frac{m}{q}\lambda+\frac{n}{q}=0;$ to avoid similar complications as in Theorem \ref{ThmB}, we assume that all roots $\xi_{1}, \xi_{2},\xi_{3}$ are real. Vieta's formulas imply that
\begin{equation}
\left\{
 \begin{array}{r@{\;\;}l}
&\xi_{1}\xi_{2}\xi_{3}=-\frac{n}{q},\\
& \xi_{1}+\xi_{2}+\xi_{3}=-\frac{p}{q},\\
 &\xi_{1}\xi_{2}+\xi_{2}\xi_{3}+\xi_{3}\xi_{1}=\frac{m}{q},\\
 \end{array}
 \right.
 \end{equation}
where $\xi_{1}, \xi_{2},\xi_{3}$ are not equal to $0,1$.
Let
\begin{eqnarray}\label{eqlp}
G(x)&=&F(x+1)-(\xi_{1}+\xi_{2})F(x)+\xi_{1}\xi_{2}F(x-1)\nonumber\\
&=&F(x+1)-\xi_{1}F(x)-\xi_{2}[F(x)-\xi_{1}F(x-1)].
\end{eqnarray}
Then the equation $(\ref{eq6})$ equals to
\begin{equation}\label{eq7}
G(x+1)=\xi_{3}G(x).
\end{equation}
The considerations are now divided in two cases, $\xi_{3}\neq-1$ and $\xi_{3}=-1$.

{\bf Case 1.} If $\xi_{3}\neq-1$, denote first $H(x):=F(x)-\xi_{1}F(x-1)$. From $(\ref{eqlp})$ and $(\ref{eq7})$, we have
\begin{equation}\label{eq78}
H(x+1)-\xi_{2}H(x)=G(x)=L_{b}(e_{\xi_{3}}(x-b)).
\end{equation}
The necessary reasoning for this case is now to be carried through in a number of subcases, depending on possible relatiions between the roots $\xi_{1},\xi_{2},\xi_{3}$.

{\bf Case 2.} If $\xi_{3}=-1$, then from $(\ref{eqlp})$ and $(\ref{eq7})$, we now obtain
\begin{equation}\label{eq7811}
H(x+1)-\xi_{2}H(x)=G(x)=\Xi (x).
\end{equation}
Again, a number of subcases are to be treated separately.

\section{Hayman conjecture in tropical setting}\label{Hconj}

In this section, we proceed to presenting tropical versions on the Hayman conjecture \cite{hayman1},  recalled as follows:

{\noindent\bf Hayman conjecture.} Let $f$ be a transcendental meromorphic function and $n\geq1$. Then
$f^{n}f'-1$ has infinitely many zeros.

Actually, Hayman proved the claim for $n\geq 3$, and Mues \cite{mues} for $n=2$. The final case $n=1$ has been proved, later on, by Clunie \cite{clunie} for transcendental entire functions, Bergweiler and Eremenko \cite{BE}, Chen and Fang \cite{chenfang} for transcendental meromorphic functions.

Laine and Yang \cite{LY}, Theorem 2, proposed a  difference analogue to the Hayman conjecture, proving

{\noindent\bf Theorem A.} Let $f$ be a transcendental entire
function of finite order and $a$ be a non-zero constant. If
$n\geq2$, then $f(z)^nf(z+c)-a$ has infinitely many zeros.

Liu and Yang \cite{LiuYang}, Theorem 1.4, also proved a related result on the value distribution of difference polynomials.

{\noindent\bf Theorem B.} Let $f$ be a transcendental entire
function of finite order and $a$ be a non-zero constant. If
$n\geq2$, then $f(z)^n[f(z+c)-f(z)]-a$ has infinitely many zeros.

We now consider tropical versions of the Hayman conjecture. In fact, we consider values of $f(x)^{\otimes \alpha}\otimes f(x+c)$ for different $\alpha$. This problem can be expressed as the problem of the existence of tropical meromorphic solutions of ultra-discrete equations of type $f(x)^{\otimes \alpha}\otimes f(x+c)=b(x)$. We also can consider equations of type $f(x)^{\otimes \alpha}\otimes [f(x+c)\oslash f(x)]=b(x)$, i.e. of equations of type $f(x)^{\otimes \alpha-1}\otimes f(x+c)=b(x)$.

As our first observation, we have
\begin{lemma}\label{ent} If $f(x)$ is a non-linear tropical entire function and $\alpha >0$, then $f(x)^{\otimes \alpha}\otimes f(x+c)$ is tropical entire as well.
\end{lemma}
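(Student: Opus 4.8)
The plan is to unravel the two tropical operations into classical language and then invoke the structure of tropical entire functions. Writing $g(x) := f(x)^{\otimes\alpha}\otimes f(x+c)$ means, classically, $g(x) = \alpha f(x) + f(x+c)$. Since a tropical entire function has no poles, by Definition \ref{zeropole} this is equivalent to saying $\omega_f(x_0)\ge 0$ at every point $x_0$; in other words, at every slope discontinuity the right slope exceeds the left slope. So I would first record that $f$, being tropical entire, is a continuous piecewise-linear function all of whose slope jumps $\omega_f(x_0)$ are nonnegative, i.e. $f$ is convex at each break point in the weak sense that $f'(x_0^+)\ge f'(x_0^-)$.

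Next I would check that $g$ is continuous and piecewise linear: this is immediate, because $x\mapsto \alpha f(x)$ and $x\mapsto f(x+c)$ are both continuous and piecewise linear, and sums of such functions are again of this type, with the set of slope discontinuities of $g$ contained in the union of the slope discontinuities of $f$ and those of the shifted function $f(\cdot+c)$ (the latter being the translates by $-c$ of the former). The only thing that needs care is the behaviour of the one-sided slopes, so that $g$ is genuinely tropical meromorphic in the sense of the paper (one-sided slopes exist everywhere, which is clear) and, more importantly, that $g$ has \emph{no poles}.

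For the no-poles claim I would compute $\omega_g$ directly from the definition. By linearity of the one-sided-derivative difference in the defining limit,
\begin{equation}
\omega_g(x_0) = \lim_{\varepsilon\to 0^+}\bigl[g'(x_0+\varepsilon)-g'(x_0-\varepsilon)\bigr] = \alpha\,\omega_f(x_0) + \omega_{f(\cdot+c)}(x_0) = \alpha\,\omega_f(x_0) + \omega_f(x_0+c).
\end{equation}
Since $f$ is tropical entire, $\omega_f(x_0)\ge 0$ and $\omega_f(x_0+c)\ge 0$ for all $x_0$, and since $\alpha>0$ we get $\omega_g(x_0)\ge 0$ everywhere. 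Hence $g$ has no poles, so $g$ is tropical entire, which is exactly the assertion.

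The main (and really the only) obstacle is a bookkeeping one: making sure the one-sided derivatives combine additively under the translation $f(x)\mapsto f(x+c)$, so that $\omega_{f(\cdot+c)}(x_0) = \omega_f(x_0+c)$, and that $\alpha>0$ is genuinely needed (if $\alpha<0$ the sign of $\alpha\,\omega_f(x_0)$ flips and a pole of $g$ can be created at a zero of $f$, matching the cautionary remarks elsewhere in the paper about differing signs of exponents). I would also remark, as a sanity check, that non-linearity of $f$ is assumed only to make the statement non-vacuous; it plays no role in the argument, since a linear $f$ trivially gives a linear, hence tropical entire, $g$. With $\omega_g\ge 0$ established, there is nothing further to prove.
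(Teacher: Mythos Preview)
Your proof is correct and rests on the same underlying fact as the paper's: the additivity $\omega_g(x_0)=\alpha\,\omega_f(x_0)+\omega_f(x_0+c)$, combined with $\alpha>0$ and $\omega_f\ge 0$. The paper phrases this as a brief proof by contradiction with a case split (if $G$ had a pole at $x_0$, then either $f$ is smooth at $x_0$ and $f(\cdot+c)$ must have a pole there, or $f$ has a root at $x_0$ and then $f(x+c)=G(x)-\alpha f(x)$ would have a pole), whereas your direct computation of $\omega_g$ is cleaner and makes the role of the hypothesis $\alpha>0$ more transparent.
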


\begin{proof} Suppose $G(x):=f(x)^{\otimes \alpha}\otimes f(x+c)=\alpha f(x)+f(x+c)$ has a pole at $x_{0}$, say. If $f(x)$ has no root at $x_{0}$, then $f$ must have a pole at $x_{0}+c$, contradicting the fact that $f$ has no poles. If then $\alpha f(x)$ has a root at $x_{0}$, then $-\alpha f(x)$ has a pole at $x_{0}$, hence $f(x+c)=G(x)-\alpha f(x)$ has a pole at $x_{0}$, a contradiction again.
\end{proof}

\begin{theorem}\label{th5}
If $f(x)$ is a non-linear tropical entire function and $\alpha>0$, then $f(x)^{\otimes \alpha}\otimes f(x+c)$ must have at least one root.
\end{theorem}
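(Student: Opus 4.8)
The plan is to argue by contradiction, using only the additivity of the index $\omega$ from Definition \ref{zeropole} together with Lemma \ref{ent}. Set $G(x):=f(x)^{\otimes\alpha}\otimes f(x+c)=\alpha f(x)+f(x+c)$. By Lemma \ref{ent}, $G$ is tropical entire, so $\omega_{G}(x)\geq 0$ for every $x\in\mathbf{R}$. Suppose $G$ had no root; then by definition $\omega_{G}(x)\leq 0$ for every $x$ as well, hence $\omega_{G}\equiv 0$. Since a continuous piecewise linear function all of whose one-sided slopes agree at every point has globally constant slope on the connected line $\mathbf{R}$, this means $G$ is affine, $G(x)=ax+b$.

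Next I would unwind $\omega_{G}$ in terms of $\omega_{f}$. Because $\omega_{h}(x_{0})$ is the jump $h'(x_{0}^{+})-h'(x_{0}^{-})$ of the one-sided derivatives, it is additive in $h$, it commutes with translation, and it scales by a positive constant; hence $\omega_{G}(x)=\alpha\,\omega_{f}(x)+\omega_{f}(x+c)$ for all $x\in\mathbf{R}$. Combined with $\omega_{G}\equiv 0$ this gives the relation
$$\alpha\,\omega_{f}(x)+\omega_{f}(x+c)=0 \qquad (x\in\mathbf{R}).$$

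Finally I would invoke that $f$ is non-linear: there exists a point $x_{1}$ with $\omega_{f}(x_{1})\neq 0$, and since $f$ is tropical entire (no poles) this forces $\omega_{f}(x_{1})>0$, i.e. $x_{1}$ is a root of $f$. Plugging $x=x_{1}$ into the displayed relation and using $\alpha>0$ yields $\omega_{f}(x_{1}+c)=-\alpha\,\omega_{f}(x_{1})<0$, so $x_{1}+c$ is a pole of $f$ — contradicting the fact that $f$ is tropical entire. Hence $G$ must have at least one root, which is the claim. The argument is essentially immediate; the only points deserving a sentence of justification are the translation-invariance and additivity of $\omega$, and the implication ``tropical entire with no root $\Rightarrow$ affine'', so I do not expect a genuine obstacle.
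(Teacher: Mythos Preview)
Your proof is correct and follows essentially the same approach as the paper's: both invoke Lemma \ref{ent} to see that $G$ is tropical entire, conclude that $G$ having no roots forces $G$ to be affine, and then use a root of $f$ (guaranteed by non-linearity) to produce a pole of $f(\cdot+c)$, contradicting entirety. The only cosmetic difference is that you phrase the last step explicitly via the additivity and translation-invariance of the index $\omega$, whereas the paper argues directly that $f(x+c)=G(x)-\alpha f(x)$ must have a pole where $\alpha f$ has a root.
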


\begin{proof}
By the preceding lemma, $G(x):=f(x)^{\otimes \alpha}\otimes f(x+c)=\alpha f(x)+f(x+c)$ is tropical entire. Assume that $f(x)^{\otimes \alpha}\otimes f(x+c)$ has no roots. Then $G(x)$ should be a linear function $px+q$, thus
\begin{equation}\label{lala}
\alpha f(x)+f(x+c)=px+q,
\end{equation}
where $p$ and $q$ are constants. Since $f(x)$ is non-linear tropical entire function, which implies that $\alpha f(x)$ has at least one root, say at $x_{0}$. But then $x_{0}$ is a pole of $f(x+c)$, a contradiction.
\end{proof}

We now state a partial tropical counterpart to the Hayman conjecture:

\begin{theorem}\label{th56}
If $f(x)$ is a tropical transcendental entire function and if $\alpha>0$, then $f(x)^{\otimes \alpha}\otimes f(x+c)$ must have infinitely many roots.
\end{theorem}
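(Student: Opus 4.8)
The plan is to argue by contradiction and to upgrade Theorem~\ref{th5} by localizing at a single slope discontinuity and exploiting the additivity of the index $\omega$ from Definition~\ref{zeropole}. Suppose that $G(x):=f(x)^{\otimes\alpha}\otimes f(x+c)=\alpha f(x)+f(x+c)$ has only finitely many roots. By Lemma~\ref{ent}, $G$ is tropical entire, so each of its slope discontinuities is a root; having only finitely many of them, $G$ is a tropical polynomial, and hence $G$ agrees with an affine function in each of the two tails: there are constants $M>0$ and $p,q,p',q'$ with $G(x)=px+q$ for $x\geq M$ and $G(x)=p'x+q'$ for $x\leq -M$.

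The next step is to produce a slope discontinuity of $f$ lying inside one of these tails. Since $f$ is tropical transcendental, it is not a tropical polynomial, so it has infinitely many slope discontinuities; as a continuous piecewise linear function has only finitely many of them in any bounded interval, they must be unbounded above or unbounded below. Pick accordingly a slope discontinuity $x_{0}$ of $f$ with $x_{0}\geq M$ (in the first case) or $x_{0}\leq -M$ (in the second). Because $f$ is tropical entire it has no poles, so $\omega_{f}(x_{0})>0$.

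Now the functional equation finishes the argument. On a neighbourhood of $x_{0}$ the function $G$ is affine, say $\alpha f(x)+f(x+c)=\ell(x)$ with $\ell$ affine, whence $f(x+c)=\ell(x)-\alpha f(x)$ near $x_{0}$. Since the operator $\omega_{\cdot}(x_{0})$ is additive, vanishes on affine functions, and satisfies $\omega_{f(\cdot+c)}(x_{0})=\omega_{f}(x_{0}+c)$, we obtain
\[\omega_{f}(x_{0}+c)=\omega_{\ell}(x_{0})-\alpha\,\omega_{f}(x_{0})=-\alpha\,\omega_{f}(x_{0})<0,\]
using $\alpha>0$ and $\omega_{f}(x_{0})>0$. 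Hence $x_{0}+c$ is a pole of $f$, contradicting the fact that $f$ is tropical entire. Therefore $G$ must have infinitely many roots.

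The main point to get right is the localization: one needs a genuine slope discontinuity of $f$ sitting in the tail where $G$ has already become affine, and this is precisely what transcendence of $f$ provides, since a tropical entire function that is affine outside a compact interval is a tropical polynomial. Once such a point is secured, everything is forced by the additivity of $\omega$ under tropical sums and shifts, and it is exactly the sign hypothesis $\alpha>0$ that makes $\omega_{f}(x_{0}+c)$ negative; for $\alpha<0$ no contradiction arises, which is consistent with the statement failing there. (When $c=0$ the same computation reads $(1+\alpha)\,\omega_{f}(x_{0})=0$, which is again impossible.)
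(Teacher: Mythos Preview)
Your proof is correct and follows essentially the same route as the paper: assume $G$ has finitely many roots, hence is a tropical polynomial, and then use that at a root $x_0$ of $f$ where $G$ is affine the identity $f(x+c)=\ell(x)-\alpha f(x)$ forces a pole of $f$ at $x_0+c$. The only cosmetic difference is that the paper phrases the conclusion as ``the roots of $f$ lie in a finite set, so $f$ is a tropical polynomial,'' whereas you pick a single root of $f$ in the affine tail and derive the contradiction directly.
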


\begin{proof}
By Lemma \ref{ent}, $G(x):=f(x)^{\otimes \alpha}\otimes f(x+c)$ is tropical entire. If it has, contrary to the assertion, finitely many roots only, then $G(x)$ is a tropical polynomial. Let $x_{1},\ldots x_{n}$ be its roots. If $\alpha f(x)$ has a root at $x$ such that $x_{j}<x<x_{j+1}$, then $G(x)$ is linear around $x$, hence $f$ must have a pole at $x+j$, a contradiction. Therefore, the only possible roots of $f$ are at $\{ x_{1},\ldots ,x_{n},x_{1}+c,\ldots ,x_{n}+c\}$, implying that $f$ is a tropical polynomial, contradicting the assumption that $f$ is transcendental.
\end{proof}

\begin{remark} If $\alpha<0$, the conclusion of Theorem \ref{th56} is not true. For example, if $\alpha=-2$, then the tropical exponential function $e_{2}(x)$ satisfies $e_{2}(x+1)-2e_{2}(x)=0$. This implies that $e_{2}(x)^{\otimes(-2)}\otimes e_{2}(x+1)=e_{2}(x+1)-2e_{2}(x)$ has no roots.
\end{remark}

\begin{theorem}\label{th57}
If $f(x)$ is a tropical transcendental meromorphic function with hyper-order $\rho_{2}(f)<1$ and $\alpha\neq\pm1$, then $f(x)^{\otimes \alpha}\otimes f(x+c)$ cannot be a linear function.
\end{theorem}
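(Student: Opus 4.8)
The plan is to assume, for contradiction, that $G(x):=f(x)^{\otimes\alpha}\otimes f(x+c)=\alpha f(x)+f(x+c)$ equals an affine function $px+q$, and to deduce that $f$ itself is affine — contradicting transcendence, since an affine function is a tropical polynomial. Two harmless reductions come first: if $\alpha=0$ the assertion is trivial, because then $G(x)=f(x+c)$ and $f(x+c)=px+q$ already forces $f$ affine; so I assume $\alpha\neq0$. Next I normalise $c=1$ by passing to $g(x):=f(cx)$, which is again tropical meromorphic of the same hyper-order — the substitution $x\mapsto cx$ only multiplies all slopes and all local indices $\omega_{f}$ by the nonzero constant $|c|$, and $T(r,g)=T(|c|r,f)$, so $\rho_{2}(g)=\rho_{2}(f)$ — while the equation becomes $\alpha g(x)+g(x+1)=pcx+q$. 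Thus I may work with $\alpha f(x)+f(x+1)=px+q$ and $\alpha\neq\pm1$.

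The next step is to split off a particular solution. Since $\alpha\neq-1$, the affine function $f_{0}(x):=ax+b$ with $a=p/(\alpha+1)$ and $b=(q-a)/(\alpha+1)$ satisfies $\alpha f_{0}(x)+f_{0}(x+1)=px+q$, as one checks at once from $f_{0}(x+1)=f_{0}(x)+a$. Hence $h:=f-f_{0}$ is tropical meromorphic and solves the homogeneous equation $h(x+1)=-\alpha h(x)=h(x)^{\otimes(-\alpha)}$. Because $f_{0}$ is entire and affine, $h$ has exactly the poles of $f$ (so $N(r,h)=N(r,f)$) and $m(r,h)\le m(r,f)+O(r)$; therefore $T(r,h)\le T(r,f)+O(r)$ and $\rho_{2}(h)\le\max(\rho_{2}(f),0)<1$.

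Finally I would invoke Theorem \ref{ThmA} for the equation $h(x+1)=h(x)^{\otimes(-\alpha)}$: since $-\alpha\notin\{0,1,-1\}$, it has no non-constant tropical meromorphic solution of hyper-order $<1$, so $h$ is a constant $k$; substituting back gives $(1+\alpha)k=0$, hence $k=0$ because $\alpha\neq-1$, so $f=f_{0}$ is affine — the contradiction sought. The whole argument rests on this single use of Theorem \ref{ThmA}, and the hypothesis $\alpha\neq\pm1$ is exactly what it requires there; the hypothesis is also sharp, since for $\alpha=1$ (resp. $\alpha=-1$) the equations $f(x)+f(x+c)=px+q$ (resp. $f(x+c)-f(x)=px+q$) do admit transcendental tropical meromorphic solutions. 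I expect the only place needing genuine care to be the two reductions above — checking that the variable rescaling $x\mapsto cx$ and the affine correction $f\mapsto f-f_{0}$ both keep the hyper-order below $1$ — but each amounts to a one-line estimate on $T(r,\cdot)$, so there is no substantial obstacle.
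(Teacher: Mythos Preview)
Your proof is correct and follows essentially the same approach as the paper: subtract an affine particular solution, reduce to the homogeneous equation $h(x+c)=-\alpha\,h(x)$, and invoke the hyper-order criterion to force $h$ constant. The only cosmetic difference is that the paper keeps the shift $c$ general and cites \cite[Theorem~9.1]{Laine1} directly, whereas you first rescale to $c=1$ so as to quote Theorem~\ref{ThmA}; your added care in checking that the rescaling and the affine correction preserve $\rho_{2}<1$ is a welcome detail the paper leaves implicit.
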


\begin{proof}
Let $f(x)$ be a tropical meromorphic function. Assume that
\begin{equation}\label{haha}
f(x)^{\otimes \alpha}\otimes f(x+c)=ax+b,
\end{equation}
where $a,b$ are constants. Obviously, $f_{0}(x)=\frac{a}{1+\alpha}x+\frac{b}{1+\alpha}-\frac{ac}{(1+\alpha)^{2}}$ is a solution of (\ref{haha}). All tropical meromorphic functions of (\ref{haha}) can be represented as the sum of $f_{0}(x)$ and the tropical meromorphic solutions of $g(x)^{\otimes \alpha}\otimes g(x+c)=0$. By \cite[Theorem 9.1]{Laine1}, $g(x)^{\otimes \alpha}\otimes g(x+c)=0$ has no non-constant tropical meromorphic solutions of hyper-order $\rho_{2}(f)<1$. But then $f=f_{0}+g$ is a tropical polynomial, contradicting the assumption that $f$ is transcendental.
\end{proof}

\begin{remark} The following two examples to show that if $\alpha=\pm1$, Theorem \ref{th57} is not true.
\end{remark}

\begin{example}\label{expp} If $\alpha=1$, Theorem $\ref{th57}$ is not true. This can be seen by the following example, due to Chen \footnote{L. Chen, \emph{Tropical meromorphic functions and their application on difference equations}, National Chengchi University, Master's thesis.}. Define $\pi_{a}(x)=\max\{(1-a)([x]-x), a([-x]-(-x))\}$ for $0\leq a <1$, see the graphs below for $a=0.5$. From these graphs, we observe that $\pi_{a}(x)+\pi_{a}(x-0.5)\equiv -1$ is a constant. By an easy calculation, we see that the order of $\rho(\pi_{a}(x))=2$, hence the hyper-order $\rho_{2}(\pi_{a}(x))=0$.

\begin{figure}[h]
\begin{minipage}[t]{1\linewidth}
\centering
\includegraphics[width=0.5\textwidth]{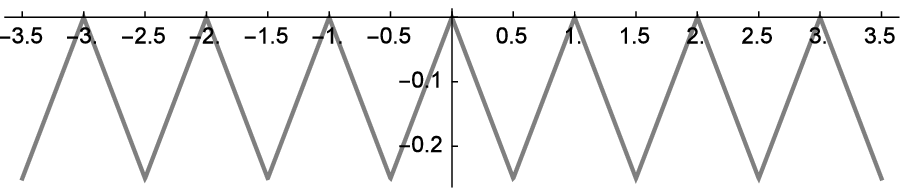}
\caption{$\pi_{a}(x),a=0.5$}\label{good}
\end{minipage}
\end{figure}

\begin{figure}[h]
\begin{minipage}[t]{1\linewidth}
\centering
\includegraphics[width=0.5\textwidth]{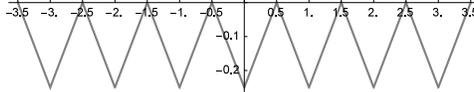}
\caption{$\pi_{a}(x-0.5),a=0.5$}\label{gooddd}
\end{minipage}
\end{figure}
\end{example}

\begin{example} The counter-example when $\alpha=-1$ can be from $\Psi(x)$ in Proposition $\ref{P4}$ with modifications.
Define
\begin{equation}
F(x)=\left\{
 \begin{array}{r@{\;,\;}l}
\sum_{j=0}^{[\frac{x}{q}]}\max(0,x-qj) &x\geq q,\\
\sum_{j=0}^{[\frac{x}{q}]+1}\max(0,x-qj) & 0\leq x<q,\\
 x+\sum_{j=[\frac{x}{q}]}^{0}\max(0,-x+jq) & x<0.\\
 \end{array}
 \right.
\end{equation}
We see that $F(x)$ is a continuous piecewise linear function, hence $F(x)$ is a tropical meromorphic function of $\rho_{2}(f)=0$ and satisfies $F(x)-F(x-q)=x$, which implies that $F(x)^{\otimes(-1)}\otimes F(x-q)=-x.$
\end{example}

\section{Br\"{u}ck conjecture in the tropical setting}\label{Bruck}

In this final section, we add some remarks on the tropical counterpart to the Br\"{u}ck conjecture \cite{bruck}, originally formulated as follows:

{\noindent\bf Conjecture.} If $f$ is a non-constant entire function with hyper-order $\rho_{2}(f)<+\infty$, where $\rho_{2}(f)$ is not a positive integer, and if $f$ and $f'$ share one finite value $a$ CM, then $$\frac{f'(z)-a}{f(z)-a}=c$$ for some constant $c\neq0$.

Recently, using difference analogues of logarithmic derivative lemma, Heittokangas $et$ $al$  \cite{janne} proved a difference result related to the Br\"{u}ck conjecture:

{\noindent\bf Theorem C\cite[Theorem 2.1]{janne}}. Let $f$ be a transcendental meromorphic function with finite order $\rho(f)<2$, and let $c\in\mathbf{C}$. If $f(z)$ and $f(z+c)$ share the value $a\in\mathbf{C}$ and $\infty$ CM, then
\begin{equation}
\frac{f(z+c)-a}{f(z)-a}=\tau \nonumber
\end{equation}
for some constants $\tau$.

As a tropical counterpart to the preceding Theorem C, we prove

\begin{theorem}\label{tb}
Let $f(x)$ be a tropical entire function, $a\in\mathbf{R}$ be fixed and suppose that $\max (f(x+1),a)$ and $\max (f(x),a)$ have the same roots with the same multiplicity for all $x\in\mathbf{R}$. Then three alternatives may appear, namely either
$$f(x)=\Pi (x)+Bx$$
for all $x$ close enough to $-\infty$, or
$$f(x)=\Pi (x)+Bx$$
for all $x$ close enough to $+\infty$, or
$$f(x)=A\Psi (x)+\Pi (x)+(B-A)x$$
for all $x$ such that $|x|$ is large enough. Here $A,B$ are real constants, $\Pi (x)$ is a tropical $1$-periodic function and $\Psi (x)$ is as defined in (\ref{psi2}).
\end{theorem}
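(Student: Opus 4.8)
The plan is to pass from $f$ to $g(x):=f(x)\oplus a=\max(f(x),a)$, translate the root-sharing hypothesis into a first-order difference equation with \emph{affine} right-hand side, solve it with the propositions of Section~\ref{lemmas}, and finally use the convexity of $f$ forced by the absence of poles (cf.\ Definition~\ref{zeropole}) to pull the resulting representation back to $f$ near $\pm\infty$.

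\emph{Step 1: reduction to an affine difference equation.} Since $f$ has no poles it is piecewise linear with non-decreasing slopes, i.e.\ convex; hence so is $g=\max(f,a)$, being a maximum of two convex functions, and in particular $g$ is again tropical entire. For a tropical entire function the multiplicity of a root at $x_{0}$ is exactly the slope jump $\omega(x_{0})\ge0$. Writing $u(x)=\max(f(x+1),a)=g(x+1)$ and $v(x)=\max(f(x),a)=g(x)$, one has $\omega_{u}(x_{0})=\omega_{g}(x_{0}+1)$ and $\omega_{v}(x_{0})=\omega_{g}(x_{0})$, so the hypothesis that $u$ and $v$ have the same roots with the same multiplicities says precisely that $\omega_{g}(x_{0}+1)=\omega_{g}(x_{0})$ for every $x_{0}$. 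Thus the continuous piecewise linear function $g(x+1)-g(x)$ has zero slope jump everywhere, hence is affine: $g(x+1)-g(x)=Ax+B$ for some constants $A,B\in\mathbf{R}$.

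\emph{Step 2: solving for $g$ and fixing the sign of $A$.} Splitting $Ax+B$ into its linear and constant parts and applying Proposition~\ref{P5} (to $f(x+1)-f(x)=Ax$), Proposition~\ref{P1} (to $f(x+1)-f(x)=B$) and linearity --- recalling from Proposition~\ref{P4} that $\Psi(x+1)-\Psi(x)=x+1$ --- every tropical meromorphic solution has the form
$$g(x)=A\Psi(x)+(B-A)x+\Pi(x),$$
with $\Pi$ tropical $1$-periodic. Since $g=\max(f,a)\ge a$ is bounded below while $\Psi$ has order $2$ (Proposition~\ref{P4}), so that $A\Psi(x)$ runs to $+\infty$ superlinearly in both directions, we must have $A\ge0$; and if $A=0$ then boundedness below also forces $B=0$, whence $g=\Pi$ is $1$-periodic and, being tropical entire (convex), constant --- so $f$ is a tropical entire function bounded above, i.e.\ constant. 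That is the first (equivalently, second) alternative, with $B=0$.

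\emph{Step 3: the case $A>0$ and the main obstacle.} Assume $A>0$. The key point is that $E:=\{x:f(x)<a\}$ is an interval, because $f$ is convex; on $E$ we have $g\equiv a$, so whenever $x$ and $x+1$ both lie in $E$ the equation $g(x+1)-g(x)=Ax+B$ gives $Ax+B=0$, i.e.\ $x=-B/A$. Hence $E$ contains no two points at distance $1$ and is therefore a bounded interval (in fact $A\Psi+\Pi$ is affine on no interval of length $>1$, so $|E|\le1$). Consequently $\{x:f(x)\ge a\}$ contains $(-\infty,-R]\cup[R,\infty)$ for some $R>0$, and on that set $f(x)=\max(f(x),a)=g(x)=A\Psi(x)+(B-A)x+\Pi(x)$ --- this is the third alternative, valid for all $x$ with $|x|$ large, and an unbounded $E$ would, as in Step~2, collapse to $A=0$, so no further cases arise. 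I expect Step~3 to be the real obstacle: one has to invoke the convexity of $f$ and the affine difference equation \emph{together} to see that the exceptional set where $f<a$ is compact --- without this the representation of $g$ would not reach $f$ near $\pm\infty$. A secondary, bookkeeping nuisance is that $\Pi$ may carry poles at the integers, of multiplicity at most $A$, absorbed by the corners of $A\Psi$; so in checking that the displayed functions are genuinely tropical entire one must keep track of $\omega$ rather than mere continuity.
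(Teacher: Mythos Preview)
Your proof is correct, and both you and the paper begin from the same key observation: the root-sharing hypothesis forces $g(x):=\max(f(x),a)$ to satisfy $g(x+1)-g(x)=Ax+B$ for constants $A,B$ (the paper quotes \cite[Proposition~6.5]{KLT}, while you compute it directly from $\omega_g(x_0+1)=\omega_g(x_0)$).

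The routes diverge after that. The paper case-splits on the asymptotics of $f$: if $f$ has a finite or $-\infty$ limit at one end it reads off $A=0$ from the bounded side and then solves $f(x+1)=f(x)+B$ on the half-line where $f\to+\infty$; if $f\to+\infty$ at both ends it solves $f(x+1)=f(x)+Ax+B$ directly for large $|x|$. You instead solve for $g$ \emph{globally} as $g=A\Psi+(B-A)x+\Pi$ via Propositions~\ref{P1} and~\ref{P5}, then exploit the lower bound $g\ge a$ to obtain $A\ge0$, reduce the case $A=0$ to $g$ (hence $f$) constant, and for $A>0$ use convexity of $f$ together with the affine equation to show the sublevel set $\{f<a\}$ has length at most $1$, so that $f=g$ outside a compact interval. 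Your argument is slightly sharper --- it makes explicit that the first two alternatives in the statement occur only with $B=0$ (constant $f$), something the paper's case analysis leaves implicit --- at the cost of the extra convexity/interval argument in Step~3, which the paper avoids by working asymptotically from the start.
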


\begin{proof} By \cite{KLT}, Proposition 6.5, $(\max (f(x+1),a))\oslash (\max (f(x),a))$ is linear, hence
\begin{equation}\label{eqn}
\max (f(x+1),a)=\max (f(x),a)+Ax+B
\end{equation}
for some real constants $A,B$. As $f(x)$ is tropical entire, it cannot be upper bounded.

Suppose first that either $\lim_{x\rightarrow -\infty}f(x)\in\mathbf{R}\cup\{ -\infty\}$ or $\lim_{x\rightarrow +\infty}f(x)\in\mathbf{R}\cup\{ -\infty\}$. If we now have $x\rightarrow +\infty$, then it is immediate to conclude from (\ref{eqn}) that $A=0$. Letting then $x$ be close enough to $-\infty$, (\ref{eqn}) takes the form
$$f(x+1)=f(x)+B.$$
Solving this implies $f(x)=\Pi (x)+Bx$. Similar reasoning applies the case of $\lim_{x\rightarrow -\infty}f(x)\in\mathbf{R}\cup\{ -\infty\}$.

It remains to consider the case when $|f(x)|\rightarrow +\infty$ as $x\rightarrow\pm\infty$. Suppose first $x\rightarrow +\infty$. Letting $x$ be large enough so that (\ref{eqn}) takes the form
$$f(x+1)=f(x)+Ax+B.$$
An immediate solution to this equation is $f(x)=A\Psi (x)+\Pi (x)+(B-A)x$ for all $x$ large enough. The corresponding case when $x\rightarrow -\infty$ is immediate.
\end{proof}

\begin{remark} The corresponding considerations for $f(x)$ being tropical meromorphic and non-entire are apparently more complicated, to be treated elsewhere.
\end{remark}




\end{document}